\newtheorem{theorem}{Theorem}[section]
\newtheorem{corollary}[theorem]{Corollary}
\newtheorem{lemma}[theorem]{Lemma}
\newtheorem{remark}[theorem]{Remark}
\newtheorem{example}{Example}
\newcommand{\Q}{\mathbb{Q}}
\newcommand{\eps}{\epsilon}
\newcommand{\F}{\mathbb{F}}
\newcommand{\Z}{\mathbb{Z}}
\newcommand{\calC}{C}
\newcommand{\Hol}{\mathrm{Hol}}
\newcommand{\Aut}{\mathrm{Aut}}
\title{The Galois group of $x^{2p} + bx^p + c^p$ over $\Q$}
\author{Akash Jim}
\address{Princeton University\\
304 Washington Road \\
Princeton, NJ 08540}
\email{ajim@princeton.edu}
\author{Thomas Hagedorn}
\address{Department of Mathematics and Statistics\\
The College of New Jersey\\
2000 Pennington Road\\
Ewing, NJ 08618}
\email{hagedorn@tcnj.edu}
\date{\today}
\subjclass[2020]{12F10, 12E05, 11R09}
\keywords{Galois group, irreducible, trinomial, Dickson polynomial, power compositional}
\begin{document}

\maketitle

\begin{abstract}
    We prove an irreducibility criterion for polynomials of the form $h(x)=x^{2m} + bx^m + c_1 \in F[x]$ relating to the Dickson polynomials of the first kind $D_p$. In the case when $F = \mathbb{Q}$, $m$ is a prime $p>3$, and $c_1=c^p$, for $c\in\Q$, we explicitly determine the Galois group of $d_h= D_p(x, c) + b$, which is $\mathrm{Aff}(\mathbb{F}_p)$ or $C_p \rtimes C_{(p - 1)/2} \vartriangleleft \mathrm{Aff}(\mathbb{F}_p)$, and the Galois group of $h$, which is $C_2 \times \mathrm{Aff}(\mathbb{F}_p), \mathrm{Aff}(\mathbb{F}_p)$, or $C_2 \times (C_p \rtimes C_{(p - 1)/2}) \vartriangleleft C_2 \times \mathrm{Aff}(\mathbb{F}_p)$.
\end{abstract}

\section{Introduction}
Let $F$ be a field, $f(x)\in F[x]$ a polynomial, and let $K/F$ be the splitting field of $f(x)$ over $F$.  If $f(x)$ is a separable polynomial, the central result of Galois theory is that there is a bijective correspondence between the subgroups of $G=\mathrm{Gal}(K/F)$, the automorphisms of $K$ that fix $F$, and the subfields $L\subset K$ containing $F$. The group $G$ contains much information about the field extension $K/F$.  In particular, Galois proved (see \cite[VI, Thm. 7.2]{lang}) that the roots of $f(x)$ can be found via the usual arithmetic operations and $n$th roots precisely when $G$ is a solvable group.

When $F=\Q$, the Galois group of $f(x)$ has been determined for a number of classes of polynomials of small degree.  
For example, the Galois group of the trinomial $f(x)=x^{2k} + bx^k + c \in \mathbb{Q}[x]$ has been determined in the case of some small $k$.  The case $k=3$ was addressed in the work of \cite{sextics} and \cite{power-compositional}, and the case $k=4$  was solved by \cite{octics}, in the case when $c=1$ or a rational square.  In \cite{reciprocal}, Jones solves the case when $k$ is a prime $p>3$, $c=1$, and $b$ is an integer with $|b| \geq 3$.  In this paper, we expand on these results and determine the Galois group of the trinomial polynomial $f(x)$ in the case when $k>3$ is prime and $c$ is a $p$-th power in $\Q$.  In the process of determining this Galois group, we are also able to determine the Galois group of a related family of polynomials formed from the Dickson polynomials.

In \cite{dickson}, Dickson introduced a set of polynomials that often give automorphisms of 
the finite field $\F_{q^r}$.  The Dickson polynomials of the first kind are defined by 
\begin{equation*}
    \begin{aligned}
       D_1(t, n) &= t\\ 
       D_2(t, n) &= t^2 - 2n\\
       D_{k}(t, n) &= tD_{k -1}(t, n) - nD_{k-2}(t, n)\text{ for }k>2.  
    \end{aligned}
\end{equation*}
$D_k(t,n)$ is a degree $k$ polynomial and the next few polynomials are:
\begin{equation*}
    \begin{aligned}
        D_3(t, n)&= t^3-3nt\\
        D_4(t,n)&=t^4-4nt^2+2n^2\\
        D_5(t,n)&=t^5-5nt^3+5n^2t\\
        D_6(t,n)&=t^6-6nt^4+9n^2t^2-2n^3\\
        D_7(t,n)&=t^7-7nt^5+14n^2t^3-7n^3t\\
    \end{aligned}
\end{equation*}
A  closed-form expression for $D_k(t,n)$ is given by:
\begin{equation*}
    {\displaystyle D_{k}(t,n )=\sum _{i=0}^{\left\lfloor {\frac {k}{2}}\right\rfloor }{\frac {k}{k-i}}{\binom {k-i}{i}}(-n )^{i}t^{k-2i}\,.}
\end{equation*}

We first establish a reducibility criterion that relates to the Dickson polynomials. It generalizes Theorem 1.1(1) of Jones \cite{reciprocal}, who considered the case when $h \in \Z[x]$, $m$ is odd, $c= 1$, and $|b| \geq 3$.

\begin{theorem} \label{reducibility-criterion}
    Let $F$ be a field and $m>1$. The polynomial $h(x) = x^{2m} + bx^m + c \in F[x]$ is reducible if and only if one of the following conditions holds:
    \begin{enumerate}
        \item $f(x) = x^2 + bx + c$ is reducible; or
        \item\label{red-item2} For some prime $p$ divisor of $m$, there exist $n,\, t \in F$ with $c = n^p$ and $b = -D_p(t, n)$; or
        \item\label{red-item3} $4 \mid m$ and there exist $n, t \in F$ with $c = 16n^4$ and $b = 4D_4(t, n)$.
    \end{enumerate} 
\end{theorem}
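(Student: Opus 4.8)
The plan is to convert the reducibility of $h$ into a question about binomials through the substitution $y = x^m$, and then to feed the classical criterion for the irreducibility of $x^m - a$ (the Capelli--Vahlen theorem; see \cite{lang}) through the Dickson identity $D_k(u+v,\,uv) = u^k + v^k$, which follows from the defining recurrence $D_k = tD_{k-1} - nD_{k-2}$ by a one-line induction (equivalently from the closed form above). This identity is the bridge that turns conditions living in the quadratic extension generated by a root of $x^2+bx+c$ into conditions on $b$ and $c$ inside $F$ itself.

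First, write $h(x) = g(x^m)$ with $g(y) = y^2 + by + c$. If $g$ is reducible over $F$ then so is $h$, and this is precisely alternative (1); so assume $g$ is irreducible, fix a root $\alpha$, and let $\alpha' = -b-\alpha$ be its conjugate, so that $\alpha\alpha' = c$ and $[F(\alpha):F] = 2$. By the standard tower argument, $h = g(x^m)$ is irreducible over $F$ iff $x^m - \alpha$ is irreducible over $F(\alpha)$; and by Capelli--Vahlen the latter fails iff $\alpha \in F(\alpha)^q$ for some prime $q \mid m$, or $4 \mid m$ and $\alpha \in -4\,F(\alpha)^4$. Thus, granting $g$ irreducible, $h$ is reducible iff (A) $\alpha \in F(\alpha)^p$ for some prime $p \mid m$, or (B) $4 \mid m$ and $\alpha \in -4\,F(\alpha)^4$.

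Next I would prove (A) $\Leftrightarrow$ (2) and (B) $\Leftrightarrow$ (3), still assuming $g$ irreducible. For the forward direction of (A): if $\alpha = \beta^p$ with $\beta \in F(\alpha)$, then $\beta \notin F$ (otherwise $\alpha \in F$), so $\beta$ has $F$-minimal polynomial $y^2 - ty + n$ with $t,n \in F$; writing $\beta' = t - \beta$ for the other root, the Dickson identity gives $\beta^p + \beta'^p = D_p(t,n)$ and $\beta^p\beta'^p = n^p$, so $\alpha = \beta^p$ is a root of $z^2 - D_p(t,n)z + n^p$. Since $g$ is the minimal polynomial of $\alpha$ over $F$, the two monic quadratics coincide, forcing $b = -D_p(t,n)$ and $c = n^p$, i.e.\ (2). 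Conversely, given $c = n^p$ and $b = -D_p(t,n)$, let $\beta$ satisfy $\beta^2 - t\beta + n = 0$ and put $\beta' = t-\beta$; then $\beta^p$ and $\beta'^p$ are the two roots of $g$, so (relabelling if necessary) $\alpha = \beta^p$ and $F(\alpha) \subseteq F(\beta)$, whence $F(\beta) = F(\alpha)$ by comparing degrees, so $\alpha \in F(\alpha)^p$. The argument for (B) $\Leftrightarrow$ (3) is identical with $p$ replaced by $4$ and $\beta^p$ replaced by $-4\beta^4$, using $(-4u^4)+(-4v^4) = -4D_4(u+v,uv)$ and $(-4u^4)(-4v^4) = 16(uv)^4$. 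Assembling: $h$ is reducible iff $g$ is reducible, or $g$ is irreducible and (A) or (B) holds; and since (2) (resp.\ (3)) forces (A) (resp.\ (B)) when $g$ is irreducible and is otherwise subsumed in (1), this is exactly ``(1) or (2) or (3)''.

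The main obstacle I anticipate is care rather than depth: stating Capelli--Vahlen in a form valid over an arbitrary field (especially the $4 \mid m$ clause and the behaviour when $\mathrm{char}\,F$ divides $m$ or $F(\alpha)/F$ is inseparable), and verifying that the conditions in (2) and (3) are genuinely \emph{equivalent} to $\alpha \in F(\alpha)^p$ and $\alpha \in -4F(\alpha)^4$, not merely sufficient. The device $\beta' = t-\beta$ together with the Dickson identity is what makes both directions, and the degenerate cases (such as $\beta^p = \beta'^p$, or $g$ inseparable), go through uniformly, since nowhere is a nontrivial Galois automorphism of $F(\alpha)/F$ actually needed.
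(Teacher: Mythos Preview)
Your proposal is correct and follows essentially the same route as the paper: reduce to the irreducibility of $x^m-\alpha$ over $F(\alpha)$ via Capelli, invoke the Vahlen--Capelli criterion for binomials, and then translate the conditions $\alpha\in F(\alpha)^p$ and $\alpha\in -4F(\alpha)^4$ into conditions on $b,c$ using the Dickson/Waring identity $D_k(u+v,uv)=u^k+v^k$. Your execution for (A)$\Leftrightarrow$(2) is in fact a bit more streamlined than the paper's---you compare the two monic quadratics with root $\alpha$ directly, whereas the paper first builds explicit factorization machinery (the polynomials $p_m$ and the recursion for their middle coefficients in Theorems~\ref{recursion} and~\ref{general-Dickson}) to reach the same conclusion; the paper's detour yields an explicit cofactor of $x^2-tx+n$ in $h$, but is not needed for the equivalence itself.
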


\begin{remark}
    If $F=\Q$, Theorem~\ref{reducibility-criterion} allows one to determine the reducibility of $x^{2m} + bx^m + c \in \Q[x]$ by using only the Rational Root Test to test for a rational root of the polynomials $d_h(x)= D_p(x,c)+b$ in \eqref{red-item2} and $D_4(x, n)-b$ in \eqref{red-item3}.
\end{remark}

When $p$ is a prime and $h(x) = x^{2p} + bx^p + c^p \in \mathbb{Q}[x]$, we will see that the Galois group of $h$ is closely related to the Galois group of the polynomial
\begin{equation*}
 d_h(x)= d_{p, b, c}(x) =   D_p(x, c)+b,
\end{equation*}
where $p$ is a prime and $b, c\in F$.  
When $h$ is irreducible, we prove in Theorem \ref{galois-groups} that $d_h(x)$ is irreducible and we explicitly classify the Galois groups of $h$ and $d_h$. 
Theorem \ref{galois-groups} generalizes Theorem 1.1(2) of Jones
\cite{reciprocal}, who considered the case when $h\in \Z[x]$, $c=1$, and $|b| \geq 3$.

\begin{theorem}\label{galois-groups}
    Let $p>3$ be prime and assume $h(x) := x^{2p} + bx^p + c^p \in \mathbb{Q}[x]$ is irreducible. Then $d_h$ is irreducible and the Galois groups of $d_{h}$ and $h$ are determined as follows:
    \begin{enumerate}
        \item\label{frob-item1} If $b^2 - 4c^p \notin (-1)^{p(p-1)/2}p\mathbb{Q}^2$, the Galois group of $d_h$ is the affine group of $\mathbb{F}_p$, $\mathrm{Aff}(\mathbb{F}_p) \simeq C_p \rtimes C_{p - 1}$, and the Galois group of $h$ is $\mathrm{Aff}(\mathbb{F}_p) \times C_2$.
        \item\label{frob-item2} If $p \equiv 1 \bmod 4$ and $b^2 - 4c^p \in p\mathbb{Q}^2$, the Galois groups of $d_h$ and $h$ are identically $\mathrm{Aff}(\mathbb{F}_p)$.
        \item\label{frob-item3} If $p \equiv 3 \bmod 4$ and $b^2 - 4c^p \in -p\mathbb{Q}^2$, then the Galois group of $d_h$ is $C_p \rtimes C_{(p - 1)/2} \vartriangleleft \mathrm{Aff}(\mathbb{F}_p)$, and the Galois group of $h$ is $(C_p \rtimes C_{(p - 1)/2}) \times C_2$.
    \end{enumerate}
\end{theorem}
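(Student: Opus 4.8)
The plan is to move between $h$ and $d_h$ via the Dickson identity $D_p(u+c/u,\,c)=u^p+c^p/u^p$ and to realize the splitting field of $h$ explicitly as a Kummer extension of a cyclotomic field. Set $\Delta=b^2-4c^p$ and $p^\ast=(-1)^{p(p-1)/2}p=(-1)^{(p-1)/2}p$ (equal since $p$ is odd), so $\Q(\sqrt{p^\ast})$ is the quadratic subfield of $\Q(\zeta_p)$; the three cases are then ``$\Delta\notin p^\ast\Q^2$'' (case~\eqref{frob-item1}) and ``$\Delta\in p^\ast\Q^2$'' (cases~\eqref{frob-item2}, \eqref{frob-item3}, separated by $p\bmod 4$). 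Since $h$ is irreducible, Theorem~\ref{reducibility-criterion} with $m=p$ forces $x^2+bx+c^p$ to be irreducible, so $\Delta$ is a nonsquare and $c\ne 0$; write $z_1,z_2=\tfrac12(-b\pm\sqrt\Delta)$, so $z_1z_2=c^p$, $z_1+z_2=-b$, and $\Q(z_1)=\Q(\sqrt\Delta)$ is a quadratic field. Fix a root $\alpha$ of $h$ with $\alpha^p=z_1$ and put $\beta=c/\alpha$, so $\beta^p=z_2$ and $\alpha\beta=c$. The $2p$ roots of $h$ are the distinct numbers $\zeta_p^i\alpha,\ \zeta_p^i\beta$, so its splitting field is $M=\Q(\zeta_p,\alpha)$; the Dickson identity shows each $t_i:=\zeta_p^i\alpha+\zeta_p^{-i}\beta$ is a root of $d_h$, and since $[\Q(\alpha):\Q]=2p$ does not divide $2(p-1)$ the $t_i$ are pairwise distinct, hence are exactly the roots of $d_h$, whose splitting field $L$ therefore lies in $M$.

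Next I would pin down degrees. Put $L_0:=\Q(\zeta_p,\sqrt\Delta)$, so $[L_0:\Q]=2(p-1)$ in case~\eqref{frob-item1} and $[L_0:\Q]=p-1$ in cases~\eqref{frob-item2}--\eqref{frob-item3}. Since $M=L_0(\alpha)$ with $\alpha^p=z_1\in L_0$ and $\zeta_p\in L_0$, we have $[M:L_0]\in\{1,p\}$; the value $1$ would make $\Q(\alpha)\subseteq L_0$ abelian over $\Q$, hence equal to $M$, forcing $p-1\mid 2p$, which fails for $p>3$. So $[M:L_0]=p$ and $\mathrm{Gal}(M/L_0)=\langle\sigma\rangle\simeq C_p$, where $\sigma$ fixes $L_0$ and $\sigma(\alpha)=\zeta_p\alpha$. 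As $\sigma(t_i)=t_{i+1}$, the group $\langle\sigma\rangle$ permutes the $t_i$ as a single $p$-cycle, which already proves $d_h$ irreducible.

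Now let $G=\mathrm{Gal}(M/\Q)$. Any $\tau\in G$ has $\tau(\zeta_p)=\zeta_p^a$ and either $\tau(\alpha)=\zeta_p^k\alpha$ (if $\tau z_1=z_1$) or $\tau(\alpha)=\zeta_p^k\beta$ (if $\tau z_1=z_2$), and a short computation gives $\tau(t_i)=t_{ai+k}$ in the first case and $\tau(t_i)=t_{-ai-k}$ in the second; thus $G$ acts on $\{t_i\}\cong\F_p$ through $\mathrm{Aff}(\F_p)$, with image $\mathrm{Gal}(L/\Q)$ and kernel $\mathrm{Gal}(M/L)$. A nontrivial kernel element must fix every $t_i$, which forces $\tau(\alpha)=\beta$, $\tau(\zeta_p)=\zeta_p^{-1}$, $\tau^2=1$; such $\tau$ exists precisely when $\zeta_p\mapsto\zeta_p^{-1}$ is compatible with $\sqrt\Delta\mapsto-\sqrt\Delta$ on $L_0$ --- automatic in case~\eqref{frob-item1}, and in cases~\eqref{frob-item2}--\eqref{frob-item3} equivalent to $\left(\tfrac{-1}{p}\right)=-1$, i.e. $p\equiv3\bmod 4$. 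Hence $\mathrm{Gal}(M/L)\simeq C_2$ in cases~\eqref{frob-item1}, \eqref{frob-item3} and is trivial in case~\eqref{frob-item2} (where $L=M$). For the image, $\langle\sigma\rangle$ contributes all translations and the multiplicative part is $\{a:\tau z_1=z_1\}\cup\{-a:\tau z_1=z_2\}$: in case~\eqref{frob-item1} every $a$ occurs with $\tau z_1=z_1$, giving $\mathrm{Aff}(\F_p)$; in cases~\eqref{frob-item2}--\eqref{frob-item3}, $\tau z_1=z_1$ iff $a$ is a quadratic residue, so using again whether $-1$ is a residue one gets $\mathrm{Aff}(\F_p)$ in case~\eqref{frob-item2} and $C_p\rtimes C_{(p-1)/2}$ in case~\eqref{frob-item3}. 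This yields the Galois group of $d_h$ in all three cases.

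Finally, for $h$ itself, write $G$ as an extension $1\to\langle\sigma\rangle\to G\to\mathrm{Gal}(L_0/\Q)\to 1$; since $\gcd(p,[L_0:\Q])=1$ this splits, so $G=C_p\rtimes_\chi\mathrm{Gal}(L_0/\Q)$, where the conjugation character $\chi\colon\mathrm{Gal}(L_0/\Q)\to\Aut(C_p)=\F_p^\ast$ sends a lift of $\zeta_p\mapsto\zeta_p^a$ to $a$ if it fixes $z_1$ and to $-a$ if it swaps $z_1,z_2$ (essentially the computation above). Thus $\chi$ is an isomorphism in case~\eqref{frob-item2}, giving $G\simeq C_p\rtimes C_{p-1}=\mathrm{Aff}(\F_p)$; in cases~\eqref{frob-item1} and~\eqref{frob-item3} it is surjective with $\ker\chi\simeq C_2$, and since $\mathrm{Gal}(L_0/\Q)$ is abelian, $\ker\chi$ splits off as a direct $C_2$ factor whose complement maps isomorphically onto the image of $\chi$; as $\ker\chi$ acts trivially on $C_p$ it is central, so $G\simeq\mathrm{Aff}(\F_p)\times C_2$ in case~\eqref{frob-item1} and $G\simeq(C_p\rtimes C_{(p-1)/2})\times C_2$ in case~\eqref{frob-item3}, and this central $C_2$ is exactly $\mathrm{Gal}(M/L)$, consistent with the group of $d_h$. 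I expect the main obstacle to be the character computation together with the residue/nonresidue bookkeeping: it is the interplay of $\mathrm{Gal}(\Q(\zeta_p)/\Q)$, the sign $\left(\tfrac{-1}{p}\right)$, and the location of $\Q(\sqrt{b^2-4c^p})$ relative to $\Q(\sqrt{p^\ast})$ that both separates the three cases and produces the factor $C_{(p-1)/2}$; keeping the signs in $\chi$ straight requires careful tracking of the Kummer generators $\alpha,\beta$, and the identity $[M:L_0]=p$ is where the hypotheses $p>3$ and irreducibility of $h$ genuinely enter.
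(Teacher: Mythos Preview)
Your argument is correct and follows the same overall architecture as the paper: you label the roots of $d_h$ by $\F_p$ via $t_i=\zeta_p^{i}\alpha+\zeta_p^{-i}\beta$, show the Galois group of $M=\Q(\zeta_p,\alpha)$ acts on them through $\mathrm{Aff}(\F_p)$, and then separate the three cases by whether $\Q(\sqrt\Delta)$ coincides with the quadratic subfield $\Q(\sqrt{p^\ast})$ of $\Q(\zeta_p)$. The paper's Lemmas~\ref{B_i-distinct}--\ref{splitting-degree} and Theorem~\ref{galois-actions} play the same role as your setup of $L_0=\Q(\zeta_p,\sqrt\Delta)$ and the degree computation $[M:L_0]=p$.

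The genuine difference is in the endgame. The paper identifies $\mathrm{Gal}(L/\Q)$ and $\mathrm{Gal}(K/\Q)$ by exhibiting explicit generators $\sigma,\tau$ (Theorem~\ref{galois-actions}) and reading off the cycle types they induce on $\{D_i\}$, then matching orders. You instead compute the whole homomorphism $G\to\mathrm{Aff}(\F_p)$ at once, describing its image as $\{a:\tau z_1=z_1\}\cup\{-a:\tau z_1=z_2\}$ together with all translations, and its kernel as $\langle(\alpha\mapsto\beta,\ \zeta_p\mapsto\zeta_p^{-1})\rangle$ when that map is compatible on $L_0$; for $G$ itself you invoke Schur--Zassenhaus to write $G=C_p\rtimes_\chi\mathrm{Gal}(L_0/\Q)$ and analyze the conjugation character $\chi$. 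This is a cleaner, more structural packaging: it explains \emph{why} the answers are $\mathrm{Aff}(\F_p)$, $\mathrm{Aff}(\F_p)\times C_2$, or $(C_p\rtimes C_{(p-1)/2})\times C_2$ in terms of whether $\chi$ is injective and whether $\ker\chi$ has a complement in the abelian group $\mathrm{Gal}(L_0/\Q)$ (which in case~\eqref{frob-item3} uses exactly that $(p-1)/2$ is odd). The paper's approach, by contrast, is more concrete and yields explicit permutations, which is useful if one wants to see $G$ sitting inside $S_p$ or $S_{2p}$.

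Two small points worth tightening: your phrase ``surjective with $\ker\chi\simeq C_2$'' in cases~\eqref{frob-item1} and~\eqref{frob-item3} is slightly off, since in case~\eqref{frob-item3} the image of $\chi$ is only the index-$2$ subgroup of squares in $\F_p^{\,*}$ (as you in fact use two lines later); and the one-line justification that the $t_i$ are pairwise distinct deserves the extra sentence showing $t_i=t_j$ would force $\alpha^2\in\Q(\zeta_p)$, hence $2p\mid 2(p-1)$. Neither affects the validity of the argument.
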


\begin{remark}
   In \cite{reciprocal}, the Galois group of $d_h$ in Theorem~\ref{galois-groups} (when $c=1$) is described using $\Hol(\calC_n)$, the Holomorph of $\calC_n$. It is defined as the semi-direct product: 
   \begin{equation*}
   \Hol(C_n):= C_n\rtimes \Aut(\calC_n)
   \end{equation*}
   Then the Galois group of $h$ is $\Hol(\calC_{2p})$ in part~\eqref{frob-item1} and $\Hol(\calC_p)$ in 
   part~\eqref{frob-item2}.  Part~\eqref{frob-item3} does not appear as part of  \cite[Thm 1.1(2)]{reciprocal} as $b^2-4>0$ by its assumption that $|b|\geq 3$ but the hypothesis of part~\eqref{frob-item3} is that $b^2-4\leq 0$. 
\end{remark}
\begin{example}
Let $p = 5$, and consider $h(x) = x^{10} - 3x^5 + 32$. By Theorem~\ref{reducibility-criterion}, $h$ is irreducible. By Theorem~\ref{galois-groups}, $h$ has Galois group $\mathrm{Aff}(\mathbb{F}_5) \times C_2$ and
\begin{equation*}
    d_h(x) = x^5-10x^3+20x-3
\end{equation*}
has Galois group $\mathrm{Aff}(\mathbb{F}_5)$.
\end{example}

\begin{remark}
    The case of $p = 2$ is a sub-case of the quartic, which has long been solved. Here, the Galois group of $h$ is $C_4$, and the Galois group of $d_h$ is $C_2$.
\end{remark}
    
\begin{remark}
    The case of $p = 3$ has been solved by Awtrey, Buerle, and Griesbach using resolvents as Example 4.1 of \cite{power-compositional}, over a general field. This generalizes Harrington and Jones's \cite{sextics}. In the case when $p=3$, the statement of Theorem \ref{galois-groups} is true, and can be derived from the work of \cite{power-compositional}. (The Galois group must either be $C_6 \simeq (C_3 \rtimes C_1) \times C_2$ or $D_6 \simeq S_3 \times C_2 \simeq \mathrm{Aff}(\mathbb{F}_3) \times C_2$, which can easily be distinguished by the degree of the extension.) For technical reasons, our proof of  Theorem~\ref{galois-groups} does not work when $p=3$.
\end{remark}

\begin{remark}\label{Waring}
    The Dickson polynomials of the first kind appear here because of a special case of Waring's identity, \cite{dickson-lidl}: $$\beta^k + \beta_1^k = D_k(\beta + {\beta_1}, \beta{\beta_1})$$ The case of $n = 1$, with an arbitrary $k$, is known as the $k$th Vieta-Lucas polynomial of $t$ \cite{vieta}. These polynomials are used in the work of Jones \cite{reciprocal} in the case of polynomials $x^{2m} + Ax^m + 1 \in \mathbb{Z}[x]$. 
\end{remark}

\section{Some Preliminaries}

Let $F$ denote an arbitrary field. $\mathrm{N}^K_F(\alpha)$ and $\mathrm{Tr}^K_F(\alpha)$ will denote the norm and trace of $\alpha \in K$ with respect to $F$. Let $f(x)=f_{b,c}(x)$ denote the monic trinomial $x^2 + bx + c \in F[x]$, and denote its roots by $\alpha, \alpha_1$ and its determinant by $\Delta := b^2 - 4c$.

A key theorem in establishing irreducibility of power compositional polynomials (cf., e.g., \cite{sextics}, \cite{octics}) is Capelli's Theorem (see Theorem 22 of \cite{schinzel} for proof):

\begin{theorem}[Capelli's Theorem] \label{Capelli}
Let  $f(x), g(x) \in F[x]$ and assume $f(x)$ is irreducible with root $\alpha$.  Then $f(g(x))$ is reducible in $F[x]$ if and only if the polynomial $g(x) - \alpha$ is reducible in $F(\alpha)[x]$. Moreover, if $g(x) - \alpha $ has the prime factorization $
g(x) - \alpha= C\prod_{i = 1}^r h_i(x)^{e_i}$, with $C\in F(\alpha)$ and irreducible polynomials $h_i(x)\in F(\alpha)[x]$, then 
\begin{equation*}
  f(g(x)) = \tilde{C}\prod_{i = 1}^r \mathrm{N}_F^{F(\alpha)}(h_i(x))^{e_i},  
\end{equation*} where $\tilde{C} \in F$ and
$\mathrm{N}_F^{F(\alpha)}(h_i(x))$ are irreducible. 
\end{theorem}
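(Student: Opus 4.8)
The plan is that everything in the statement follows from the ``moreover'' clause, which in turn rests on a single identity for the coefficient-wise norm $\mathrm{N}_F^{F(\alpha)} \colon F(\alpha)[x] \to F[x]$. Namely, after reducing to the case $f$ monic (absorbing $\mathrm{lc}(f)$ into the final constant $\tilde C$), I claim $f(g(x)) = \mathrm{N}_F^{F(\alpha)}\bigl(g(x) - \alpha\bigr)$. I would run the argument assuming $f$ separable, which is all that is needed for the applications in this paper; writing $\alpha = \alpha_1, \dots, \alpha_n$ for the roots of $f$ and $\sigma_i \colon F(\alpha) \hookrightarrow \bar{F}$ for the $n$ distinct $F$-embeddings, applying $\sigma_i$ to the coefficients of $g(x) - \alpha$ yields $g(x) - \alpha_i$, so $\mathrm{N}_F^{F(\alpha)}(g(x) - \alpha) = \prod_{i=1}^n (g(x) - \alpha_i) = f(g(x))$ since $f(y) = \prod_i (y - \alpha_i)$. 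I will also use that this norm is multiplicative and multiplies degrees by $[F(\alpha):F] = n$.

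Granting the irreducibility claim below, both directions of the equivalence are quick. If $g(x) - \alpha = A(x)B(x)$ over $F(\alpha)$ with $1 \le \deg A, \deg B < \deg g$, then $f(g(x)) = \mathrm{N}_F^{F(\alpha)}(A)\,\mathrm{N}_F^{F(\alpha)}(B)$ is a nontrivial factorization over $F$, since the two factors have degrees $n \deg A$ and $n \deg B$, both strictly between $0$ and $n \deg g$. Conversely, if $g(x) - \alpha$ is irreducible over $F(\alpha)$, then $f(g(x)) = \tilde C\,\mathrm{N}_F^{F(\alpha)}(g(x) - \alpha)$ is, up to a scalar in $F$, the norm of a single irreducible polynomial, and the claim asserts such a norm is irreducible over $F$. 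In general, factoring $g(x) - \alpha = C \prod_i h_i(x)^{e_i}$ over $F(\alpha)$ and applying the norm gives $f(g(x)) = \tilde C \prod_i \mathrm{N}_F^{F(\alpha)}(h_i)^{e_i}$ with $\tilde C = \mathrm{N}_F^{F(\alpha)}(C) \in F$, so it remains to show the $\mathrm{N}_F^{F(\alpha)}(h_i)$ are irreducible over $F$ and pairwise non-associate.

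For irreducibility of $\mathrm{N} := \mathrm{N}_F^{F(\alpha)}$ on an irreducible factor $h_i$, I would pick a root $\beta$ of $h_i$. Since $h_i \mid g(x) - \alpha$, we get $g(\beta) = \alpha$, so $\alpha \in F(\beta)$, hence $F(\alpha) \subseteq F(\beta)$ and $[F(\beta):F] = [F(\beta):F(\alpha)]\,[F(\alpha):F] = (\deg h_i)\,n = \deg \mathrm{N}(h_i)$. As $\beta$ is a root of $\mathrm{N}(h_i) \in F[x]$ and $\deg \mathrm{N}(h_i) = [F(\beta):F]$, the polynomial $\mathrm{N}(h_i)$ must be a scalar multiple of the minimal polynomial of $\beta$ over $F$, hence irreducible. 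For the non-associate claim: if $\mathrm{N}(h_i)$ and $\mathrm{N}(h_j)$ were associate, a root $\beta$ of $h_j$ (so $g(\beta) = \alpha$) would be a root of some embedded factor $\sigma_k(h_i)$, which divides $g(x) - \sigma_k(\alpha)$; then $\sigma_k(\alpha) = g(\beta) = \alpha$, so $\sigma_k$ fixes $F(\alpha)$ and $\sigma_k(h_i) = h_i$, forcing $h_i$ and $h_j$ to share the root $\beta$ — impossible for distinct irreducibles over $F(\alpha)$. Assembling, $f(g(x)) = \tilde C \prod_i \mathrm{N}(h_i)^{e_i}$ is the prime factorization over $F$, and the equivalence $f(g(x))$ reducible $\iff g(x) - \alpha$ reducible drops out.

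The step I expect to be the real obstacle is making the norm map behave correctly over imperfect fields: if $f$ is inseparable, the ``product over embeddings'' description of $\mathrm{N}_F^{F(\alpha)}$ is no longer literally valid, and one must pass to a resultant (or inseparability-degree) formulation before the degree count $\deg \mathrm{N}(h_i) = n \deg h_i$ and the conjugacy argument in the non-associate step go through verbatim. Since every use of the theorem in this paper is over $\Q$, I would either state Capelli's Theorem in the separable case or simply cite Schinzel's Theorem~22 for the fully general version; the separable argument sketched above is otherwise complete.
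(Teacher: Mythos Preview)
The paper does not actually prove Capelli's Theorem: it states the result and refers the reader to Theorem~22 of Schinzel for a proof. Your proposal therefore goes beyond what the paper does. The argument you sketch is the standard one and is correct in the separable case: the key identity $f(g(x))=\mathrm{N}_F^{F(\alpha)}(g(x)-\alpha)$ via the product over embeddings, the degree count $[F(\beta):F]=(\deg h_i)\,[F(\alpha):F]$ showing each $\mathrm{N}(h_i)$ is the minimal polynomial of a root $\beta$, and the conjugacy argument for pairwise non-associateness are all sound. You also correctly flag that the embeddings description of the norm breaks down when $f$ is inseparable, and propose either restricting to the separable case or citing Schinzel---which is precisely the paper's own choice. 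Since every application here is over $\Q$, your separable proof is entirely sufficient for the paper's purposes.
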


Two well known consequences of
this theorem are:

\begin{theorem}[Capelli] \label{cyclotomic} \cite[VI, Thm. 9.1]{lang}
    Let $a \in F, n \in \mathbb{N}$. $x^n - a$ is irreducible in $F[x]$ if and only if the following conditions hold:
    \begin{enumerate}
    \item $a \notin F^p $ for all primes $p\mid n$; and
    \item If $4 \mid n$, $a \notin  -4F^4$. 
    \end{enumerate}
\end{theorem}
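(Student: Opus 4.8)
The plan is to treat the two implications of the theorem separately. The ``only if'' direction is elementary and goes by contraposition: if $a=b^p$ for a prime $p\mid n$, then $x^n-a=x^n-b^p$ has $x^{n/p}-b$ as a proper factor, and if $4\mid n$ and $a=-4b^4$, then, writing $n=4k$, evaluating the Sophie Germain identity $u^4+4b^4=(u^2-2bu+2b^2)(u^2+2bu+2b^2)$ at $u=x^k$ exhibits $x^n-a$ as a product of two polynomials of degree $2k\ge 2$; in either case $x^n-a$ is reducible. For the ``if'' direction I assume conditions (1) and (2) and induct on $n$, using Capelli's Theorem (Theorem~\ref{Capelli}) at each step to trade $x^n-a$ for a polynomial of smaller degree over a larger field.

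The case $n=1$ is trivial. The basic case is $n=p$ prime, where $a\notin F^p$ by (1): if $x^p-a$ had a proper monic irreducible factor $g$ of degree $d$ with $1\le d<p$, then over a splitting field the roots of $x^p-a$ are $\zeta^i\alpha$ $(0\le i<p)$ with $\alpha^p=a$ and $\zeta$ a primitive $p$-th root of unity, so $g$, being a product of $d$ of these linear factors, has $g(0)=\pm\zeta^j\alpha^d$ and hence $g(0)^p=\pm a^d$. Since $-1=(-1)^p\in F^p$ when $p$ is odd (and $d=1$ forces $a\in F^2$ when $p=2$), this gives $a^d\in F^p$, and then a Bézout relation $1=ud+vp$, available because $\gcd(d,p)=1$, yields $a=(a^d)^u(a^v)^p\in F^p$, a contradiction. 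Hence $x^p-a$ is irreducible.

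Now suppose $n$ is composite and has an odd prime divisor $p$; write $n=pm$ with $m\ge 2$. By the prime case $y^p-a$ is irreducible (as $a\notin F^p$ by (1)), with a root $\alpha$ satisfying $[F(\alpha):F]=p$ and $\mathrm{N}^{F(\alpha)}_F(\alpha)=a$ (the product of the roots $\zeta_p^i\alpha$ of $y^p-a$ equals $a$ since $p$ is odd). As $x^n-a=(x^m)^p-a$, Capelli's Theorem makes $x^n-a$ irreducible over $F$ equivalent to $x^m-\alpha$ being irreducible over $F(\alpha)$, and the induction hypothesis applies to the latter since $m<n$: it is irreducible iff $\alpha\notin F(\alpha)^q$ for every prime $q\mid m$ and, when $4\mid m$, $\alpha\notin -4\,F(\alpha)^4$. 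But $\alpha=\beta^q$ would give $a=\mathrm{N}^{F(\alpha)}_F(\alpha)=\mathrm{N}^{F(\alpha)}_F(\beta)^q\in F^q$, and $\alpha=-4\beta^4$ would give $a=(-4)^p\,\mathrm{N}^{F(\alpha)}_F(\beta)^4\in -4F^4$ because $(-4)^p=-4\,(2^{(p-1)/2})^4$; since $q\mid m\mid n$ and, as $p$ is odd, $4\mid m\mid n$, these contradict (1) and (2) respectively. So $x^m-\alpha$, hence $x^n-a$, is irreducible.

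The remaining case $n=2^e$ with $e\ge 2$ is the one I expect to be the main obstacle, since the exceptional factorization tied to the condition $a\in -4F^4$ has to be controlled along the entire tower $F\subset F(\sqrt a)\subset F(a^{1/4})\subset\cdots$ rather than dispatched in a single step. The key tool is the elementary fact that for any field $L$ and $d\in L\setminus L^2$, a square root of $d$ is itself a square in $L(\sqrt d)$ if and only if $d\in -4L^4$ (write a hypothetical square root as $s+t\sqrt d$ and match coefficients; conversely $\sqrt{-4c^4}$ is the square of an element of the shape $c(1\pm\sqrt{-1})$ inside $L(\sqrt{-1})=L(\sqrt d)$). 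Since $x^{2^e}-a=(x^2)^{2^{e-1}}-a$, Capelli's Theorem makes $x^{2^e}-a$ irreducible equivalent to $y^{2^{e-1}}-a$ being irreducible over $F$ together with $x^2-\gamma$ being irreducible over $F(\gamma)$, where $\gamma$ is a root of $y^{2^{e-1}}-a$; the first holds by the induction on $n$ (using (1), and (2) when $e\ge 3$), so $[F(\gamma):F]=2^{e-1}$ and only $\gamma\notin F(\gamma)^2$ remains to be shown. For $e=2$ this is exactly the statement $a\notin -4F^4$ by the fact above. For $e\ge 3$, set $\alpha_1=\gamma^{2^{e-2}}$, a square root of $a$; then $[F(\alpha_1):F]=2$, the minimal polynomial of $\gamma$ over $F(\alpha_1)$ is $x^{2^{e-2}}-\alpha_1$, and $\mathrm{N}^{F(\gamma)}_{F(\alpha_1)}(\gamma)=-\alpha_1$, so $\gamma=\mu^2$ would force $-\alpha_1\in F(\alpha_1)^2=F(\sqrt a)^2$ and hence $a\in -4F^4$ by the fact, contradicting (2). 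Therefore $\gamma\notin F(\gamma)^2$, $x^{2^e}-a$ is irreducible, and the induction closes. The crux is thus entirely in this $2$-power case: one must verify that the ``$-4$'' obstruction need only be imposed once, at the bottom of the tower, whereas every odd-prime step is handled uniformly by the single norm identity $\mathrm{N}^{F(\alpha)}_F(\alpha)=a$.
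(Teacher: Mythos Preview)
The paper does not supply its own proof of this statement: Theorem~\ref{cyclotomic} is quoted from Lang's \emph{Algebra} \cite[VI, Thm.~9.1]{lang} as a known consequence of Capelli's Theorem, with no argument given in the paper. So there is no in-paper proof to compare against.

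Your argument is correct and essentially reproduces the standard textbook proof. The ``only if'' direction via the factorizations $x^n-b^p=(x^{n/p}-b)(\cdots)$ and the Sophie Germain identity is fine. For the ``if'' direction, your induction is sound: the prime case via the constant-term/B\'ezout trick is the usual one; in the composite case with an odd prime divisor $p$ you correctly use $\mathrm{N}^{F(\alpha)}_F(\alpha)=a$ (valid since $p$ is odd) and the identity $(-4)^p=-4\bigl(2^{(p-1)/2}\bigr)^4$ to push the obstructions down to $F$; and in the $2$-power case your key lemma---that for $d\in L\setminus L^2$ a square root of $d$ lies in $L(\sqrt d)^2$ iff $d\in -4L^4$---is exactly the right tool. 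The only step worth annotating is that in the $e\ge 3$ branch you obtain $-\alpha_1\in F(\alpha_1)^2$ rather than $\alpha_1\in F(\alpha_1)^2$; this is harmless because $-\alpha_1$ is equally a square root of $a$, so the lemma applies to it directly (or, equivalently, once $a\in -4F^4$ forces $\sqrt{-1}\in F(\sqrt a)$, the two conditions coincide). Your self-assessment that the $2$-power tower is the crux is accurate, and your handling of it is clean.
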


\begin{corollary} \label{capelli-corollary}
    For $p$ prime, $x^p - \alpha \in F[x]$ is reducible if and only if $\exists \beta \in F$ with $\alpha = \beta^p$.
\end{corollary}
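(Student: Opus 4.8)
The plan is to derive this immediately from Theorem~\ref{cyclotomic} by specializing $n$ to the prime $p$. First I would observe that the set of primes dividing $n = p$ is exactly $\{p\}$, so condition (1) of Theorem~\ref{cyclotomic} reads simply $\alpha \notin F^p$. Next I would dispose of condition (2): since $p$ is prime we have $4 \nmid p$ (this holds even in the edge case $p = 2$, as $4 \nmid 2$), so the hypothesis ``$4 \mid n$'' is never satisfied and condition (2) imposes no constraint. Hence Theorem~\ref{cyclotomic} tells us that $x^p - \alpha$ is irreducible if and only if $\alpha \notin F^p$.

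The corollary then follows by contraposition: $x^p - \alpha$ is reducible if and only if $\alpha \in F^p$, i.e.\ if and only if $\alpha = \beta^p$ for some $\beta \in F$. I expect no real obstacle here; the only point deserving a word of care is the verification that condition (2) of Theorem~\ref{cyclotomic} is vacuous, which is immediate from $4 \nmid p$. (One could also give a self-contained argument avoiding Theorem~\ref{cyclotomic}: the direction $\alpha = \beta^p \Rightarrow x - \beta \mid x^p - \alpha$ is trivial, and conversely a nontrivial factorization $x^p - \alpha = g(x) h(x)$ over $F$ with $\deg g = d$, $0 < d < p$, forces the constant term of $g$ to be a unit-times product of $d$ of the roots, whose $p$-th power is $\pm\alpha^d$; using $\gcd(d,p) = 1$ to write $1 = ud + vp$ then exhibits a $p$-th root of $\alpha$ in $F$. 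But invoking Theorem~\ref{cyclotomic} is cleaner and is the route I would take.)
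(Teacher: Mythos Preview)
Your proposal is correct and matches the paper's approach: the corollary is stated immediately after Theorem~\ref{cyclotomic} with no separate proof, clearly intended as the specialization $n = p$ you describe. Your handling of the two conditions (only prime divisor is $p$; $4 \nmid p$ so condition~(2) is vacuous) is exactly the intended derivation.
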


Thus, if $f(x^p)$ is reducible, either $f(x)$ is reducible, or there exists $\beta \in F(\alpha)$ with $f(\beta^p) = 0$ and $\mathrm{N}^{F(\alpha)}_{F}(x - \beta) \mid h(x)$. 

Define $p(x) := h(x) / \mathrm{N}(x - \beta)$.  The polynomial $p(x)$ is symmetric in $\beta$ and its conjugate $\mathrm{N}(\beta)/\beta \in F(\alpha)$, so it can be expressed in terms of the symmetric polynomials. By expanding and equating coefficients, we will obtain the reducibility criterion of \ref{reducibility-criterion}.

\section{A Reducibility Criterion for $x^{2m} + bx^m + c \in F[x]$}

In this section, we prove Theorem~\ref{reducibility-criterion}, which gives a
reducibility criteria for $x^{2m}+bx^m+c\in F[x]$. Theorem~\ref{reducibility-criterion} extends Jones's reducibility criterion \cite[Thm. 1.1(1)]{reciprocal}, which considered the case when $c=1$.  The proof of 
Theorem~\ref{reducibility-criterion} is essentially the same as Jones' proof in the case $c=1$.
It will follow as a corollary from 
Theorem~\ref{power-criterion}, which gives a relationship of the 
coefficients of 
\begin{equation*}
f(x) := x^2 + bx + c \in F[x].
\end{equation*}  
when $f(x)$ is irreducible.  
Recall from the introduction that $D_m(t, n)$ are the Dickson polynomials.

\begin{theorem} \label{power-criterion}
    Let $f(x) := x^2 + bx + c \in F[x]$ be irreducible with a root $\alpha \in F(\sqrt{\Delta}) \setminus F$, where $\Delta := b^2 - 4c$. Then $\alpha \in F(\sqrt{\Delta})^m$, for some positive integer $m$ if and only if there exists some $n, t \in F$ with $c = n^m$ and $b = -D_m(t, n)$.
\end{theorem}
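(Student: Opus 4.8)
The plan is to prove the two directions of Theorem~\ref{power-criterion} by a direct computation with the conjugate pair of roots of $f$, leveraging the Waring identity for Dickson polynomials recorded in Remark~\ref{Waring}. Write $\alpha, \alpha_1$ for the two roots of $f$ in $F(\sqrt\Delta)$, so that $\alpha + \alpha_1 = -b$ and $\alpha\alpha_1 = c$, and note that $\alpha_1 = \bar\alpha$ is the image of $\alpha$ under the nontrivial element of $\mathrm{Gal}(F(\sqrt\Delta)/F)$ (here $F(\sqrt\Delta)/F$ is degree $2$ since $f$ is irreducible). The key observation is that $\mathrm{N}^{F(\sqrt\Delta)}_F(\alpha) = \alpha\alpha_1 = c$ and $\mathrm{Tr}^{F(\sqrt\Delta)}_F(\alpha) = -b$, and that Galois-conjugation is compatible with taking $m$-th roots up to an $m$-th root of unity only when such roots are available — so I must be careful to stay inside $F(\sqrt\Delta)$.

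For the reverse direction: suppose $c = n^m$ and $b = -D_m(t,n)$ for some $n, t \in F$. I would like to exhibit an $m$-th root of $\alpha$ in $F(\sqrt\Delta)$. Consider the polynomial $g(x) = x^2 - tx + n \in F[x]$ and let $\beta, \beta_1$ be its roots in $F(\sqrt{t^2 - 4n})$. Then $\beta + \beta_1 = t$, $\beta\beta_1 = n$, and by Waring's identity $\beta^m + \beta_1^m = D_m(\beta+\beta_1, \beta\beta_1) = D_m(t,n) = -b$, while $(\beta\beta_1)^m = n^m = c$. Hence $\beta^m$ and $\beta_1^m$ are the two roots of $x^2 + bx + c = f(x)$, so after relabeling $\alpha = \beta^m$, i.e. $\beta$ is an $m$-th root of $\alpha$. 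It remains to check $\beta \in F(\sqrt\Delta)$: since $\beta^m = \alpha$ and $\beta_1^m = \alpha_1$ with $\beta\beta_1 = n \in F$, one computes $\beta + \beta_1 = t \in F$ and $\beta - \beta_1 = \pm\sqrt{t^2 - 4n}$; the point is that $\beta$ generates (at most) a quadratic extension of $F$, and because $\beta^m = \alpha \notin F$ that quadratic extension must be $F(\alpha) = F(\sqrt\Delta)$ itself. So $\alpha = \beta^m \in F(\sqrt\Delta)^m$.

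For the forward direction: suppose $\alpha = \beta^m$ with $\beta \in F(\sqrt\Delta)$. Apply the nontrivial automorphism $\sigma$ of $F(\sqrt\Delta)/F$; then $\alpha_1 = \sigma(\alpha) = \sigma(\beta)^m =: \beta_1^m$ where $\beta_1 = \sigma(\beta)$. Set $n := \beta\beta_1 = \mathrm{N}^{F(\sqrt\Delta)}_F(\beta) \in F$ and $t := \beta + \beta_1 = \mathrm{Tr}^{F(\sqrt\Delta)}_F(\beta) \in F$. Then $n^m = (\beta\beta_1)^m = \beta^m\beta_1^m = \alpha\alpha_1 = c$, and by Waring's identity $-b = \alpha + \alpha_1 = \beta^m + \beta_1^m = D_m(\beta+\beta_1,\beta\beta_1) = D_m(t,n)$, i.e. $b = -D_m(t,n)$, as desired.

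The main obstacle is the subtle point in the reverse direction: given $n = \beta\beta_1 \in F$ and the relations $\beta^m = \alpha$, $\beta_1^m = \alpha_1$, one must justify that $\beta + \beta_1 \in F$ (equivalently that $\beta_1$ really equals the $F$-conjugate of $\beta$, rather than some other $m$-th root of $\alpha_1$) and that $\beta \in F(\sqrt\Delta)$ rather than in a larger extension. I expect to handle this by running the argument in the other order: start from $\beta, \beta_1$ as the roots of $g(x) = x^2 - tx + n$, prove they satisfy $\beta^m, \beta_1^m$ are roots of $f$, and then argue that since $f$ is irreducible over $F$, the element $\beta^m$ has degree $2$ over $F$; since $\beta$ lies in an extension of degree at most $2$ over $F$ (it is a root of the quadratic $g$), we must have $F(\beta) = F(\beta^m) = F(\alpha) = F(\sqrt\Delta)$, forcing $\beta \in F(\sqrt\Delta)$. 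One should also dispatch the degenerate case where $g$ itself is reducible over $F$, i.e. $t^2 - 4n$ is a square in $F$: then $\beta \in F$, so $\alpha = \beta^m \in F$, contradicting the irreducibility of $f$ — hence this case cannot occur under our hypotheses, or is handled vacuously.
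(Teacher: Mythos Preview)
Your proof is correct, and it takes a genuinely different route from the paper's. The paper proves Theorem~\ref{power-criterion} by passing through Capelli's theorem (Theorem~\ref{Capelli}) together with the explicit factorization machinery of Theorems~\ref{recursion} and~\ref{general-Dickson}: in the forward direction it factors $x^m-\alpha$ over $F(\sqrt\Delta)$, applies Capelli to obtain $f(x^m)=(x^2-tx+n)p_m(x)$, and then reads off $b=-D_m(t,n)$, $c=n^m$ by comparison with Theorem~\ref{general-Dickson}; in the reverse direction it uses Theorem~\ref{general-Dickson} to produce that same factorization of $f(x^m)$ and invokes Capelli to conclude that the quadratic factor must be the norm of a linear polynomial $x-\beta$ dividing $x^m-\alpha$. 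By contrast, you bypass Capelli and the polynomials $p_m$ entirely: you use Waring's identity (Remark~\ref{Waring}) directly to match $\beta^m+\beta_1^m$ and $(\beta\beta_1)^m$ with $-b$ and $c$, and then a simple degree count ($[F(\beta):F]\leq 2$ while $[F(\beta^m):F]=2$ forces $F(\beta)=F(\sqrt\Delta)$) to place $\beta$ in the correct field. Your argument is shorter and more elementary for this particular statement; the paper's approach has the advantage that the Capelli framework and the $p_m$ factorization are reused elsewhere (notably in the proof of Theorem~\ref{reducibility-criterion}), so it fits the surrounding development more organically. Your handling of the degenerate case $t^2-4n\in F^2$ in the reverse direction (forcing $\beta\in F$, hence $\alpha\in F$, a contradiction) is exactly what is needed and is cleaner than how the paper implicitly disposes of it.
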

 Before proving Theorem~\ref{power-criterion}, we need to establish two theorems about polynomials of the form $f(x^m)$.  Given variables $\beta, \beta_1$, define:
\begin{equation*}
    t := \beta + {\beta_1},\text{\qquad} n := \beta{\beta_1}
\end{equation*}
For $m \in \mathbb{N}$, define
\begin{equation*}
\Psi_{m, \beta}(x) := \frac{x^m - \beta^m}{x - \beta}
\end{equation*}
and define the polynomial $p_m(x) := \Psi_{m, \beta}(x)\Psi_{m, \beta_1}(x)$.  For $m>0$, let  $a_{m}$ be the coefficient of $x^{m-1}$ in  $p_m(x)$

\begin{theorem} \label{recursion}
    The $a_m$ are given by $a_1 = 1$, $a_2 = t$, and the recursive formula
    \begin{equation*}
        a_{m + 2} = ta_{m + 1} - na_m
    \end{equation*}
    and $p_m(x)$ is expressible in terms of $\{a_i\}_{1 \leq i \leq m}$ by: $$p_m(x) = \sum_{i = 1}^{m - 1} a_ix^{2m - 1 - i} + a_mx^{m - 1} + \sum_{i = 1}^{m - 1} a_{m - i}n^ix^{m - 1 - i}$$ (where the outer sums are defined to be 0 in the case of $m = 1$).
\end{theorem}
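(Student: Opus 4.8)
The plan is to prove the formula for $p_m(x)$ by direct expansion and to read off the recursion for $a_m$ as a special case. Writing the geometric series $\Psi_{m,\beta}(x) = x^{m-1} + \beta x^{m-2} + \cdots + \beta^{m-1} = \sum_{j=0}^{m-1}\beta^{j} x^{m-1-j}$ and likewise for $\beta_1$, multiplication gives
\[
p_m(x) = \sum_{j=0}^{m-1}\sum_{k=0}^{m-1} \beta^{j}\beta_1^{k}\, x^{\,2m-2-j-k}.
\]
Collecting by powers of $x$, the coefficient of $x^{\,2m-2-s}$ is $e_s := \sum \beta^{j}\beta_1^{k}$ over pairs $(j,k)$ with $0\le j,k\le m-1$ and $j+k=s$. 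The first step is to identify the coefficient of $x^{m-1}$, i.e.\ $e_{m-1} = \sum_{j=0}^{m-1}\beta^{j}\beta_1^{m-1-j}$, the complete homogeneous symmetric polynomial $h_{m-1}(\beta,\beta_1)$; by definition this is $a_m$.

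Second, I would establish the recursion. Since $t=\beta+\beta_1$ and $n=\beta\beta_1$, the elements $\beta,\beta_1$ are exactly the roots of $z^2-tz+n$, so the sequence $a_m = h_{m-1}(\beta,\beta_1) = (\beta^m-\beta_1^m)/(\beta-\beta_1)$ satisfies the linear recurrence attached to that quadratic, namely $a_{m+2}=t\,a_{m+1}-n\,a_m$; the initial values $a_1=1$ and $a_2=\beta+\beta_1=t$ are immediate. To sidestep the degenerate case $\beta=\beta_1$ one can treat $\beta,\beta_1$ (equivalently $t,n$) as indeterminates, so that everything is a polynomial identity that then specializes, verify $h_{m+1}=t\,h_m-n\,h_{m-1}$ directly from the definition of $h$, or use the generating identity $\sum_{m\ge 1}a_m y^{m-1} = \bigl((1-\beta y)(1-\beta_1 y)\bigr)^{-1} = (1-ty+ny^2)^{-1}$.

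Third, I would finish the formula for $p_m(x)$ by computing $e_s$ for all $s$. For $0\le s\le m-1$ the constraints $j,k\le m-1$ are inactive, so $e_s=\sum_{j=0}^{s}\beta^{j}\beta_1^{s-j}=h_s(\beta,\beta_1)=a_{s+1}$; taking $s=2m-2-d$ with $m-1\le d\le 2m-2$ produces the terms $a_i x^{2m-1-i}$ for $1\le i\le m-1$ together with the middle term $a_m x^{m-1}$. For $m-1\le s\le 2m-2$, set $i:=s-(m-1)\ge 0$; then in every monomial $\beta^{j}\beta_1^{s-j}$ of $e_s$ one has $j\ge i$ and $s-j\ge i$, so $(\beta\beta_1)^{i}=n^{i}$ factors out and $e_s=n^{i}\,h_{m-1-i}(\beta,\beta_1)=n^{i}a_{m-i}$; since the coefficient of $x^{m-1-i}$ in $p_m(x)$ is $e_{m-1+i}$, this yields the terms $a_{m-i}n^{i}x^{m-1-i}$ for $1\le i\le m-1$. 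Combining the three ranges gives the displayed expression (the outer sums being empty when $m=1$, consistent with $p_1(x)=1$). As a consistency check, this agrees with the self-reciprocal relation $x^{2m-2}p_m(n/x)=n^{m-1}p_m(x)$, which follows from $\Psi_{m,\beta}(n/x)=(\beta/x)^{m-1}\Psi_{m,\beta_1}(x)$ and which on its own already forces the low-degree coefficients from the high-degree ones.

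I expect the only real obstacle to be the bookkeeping in the third step: keeping the ranges of $j,k,s$ straight, justifying that $(\beta\beta_1)^{i}$ genuinely divides every monomial of $e_s$ when $s\ge m-1$, and matching the resulting index shifts to the exact form $\sum_{i=1}^{m-1} a_i x^{2m-1-i} + a_m x^{m-1} + \sum_{i=1}^{m-1} a_{m-i}n^{i}x^{m-1-i}$. The $m=1$ (and, to a lesser extent, $m=2$) boundary cases and the $\beta=\beta_1$ degeneracy also need a word, but each is handled by the remarks above.
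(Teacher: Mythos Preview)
Your proposal is correct. The overall setup---expand $p_m(x)$ as a double sum and identify $a_m$ with the complete homogeneous symmetric polynomial $h_{m-1}(\beta,\beta_1)$---matches the paper exactly. Where you diverge is in the two remaining parts. For the recursion, the paper verifies $ta_m-na_{m-1}=a_{m+1}$ by a bare-hands telescoping of the sum $\sum\beta^{m-1-i}\beta_1^{i}$, whereas you appeal to the standard fact that $h_{m-1}(\beta,\beta_1)$ satisfies the recurrence attached to the quadratic $z^2-tz+n$ (or, equivalently, to the generating function $1/(1-ty+ny^2)$). For the closed form of $p_m(x)$, the paper proceeds by induction on $m$, relating the coefficient sums in $p_{m+1}$ to those in $p_m$; you instead compute every coefficient $e_s$ directly, using the observation that when $s\ge m-1$ one can factor $(\beta\beta_1)^{s-(m-1)}=n^{s-(m-1)}$ out of each monomial. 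Your route is shorter and makes the self-reciprocal structure of $p_m$ transparent without an inductive step; the paper's inductive argument is more hands-on but requires tracking several index shifts. Either way the bookkeeping you flag (the overlap at $s=m-1$, the boundary case $m=1$, and the degeneracy $\beta=\beta_1$) is handled correctly by your remarks.
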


\begin{proof}
    We first compute $p_m(x), a_m$ from their definition.
    \begin{equation*}
    \begin{aligned}
        p_m(x) &= \Psi_{m, \beta}(x)\Psi_{m, \beta_1}(x) = (\sum_{i = 0}^{m - 1} \beta^{m - 1 - i}x^i)(\sum_{j = 0}^{m - 1}
        {\beta_1}^{m - 1 - j}x^j) \\
        &= \sum_{l = 0}^{2(m - 1)} \Big{(}\sum_{\substack{i + j = l \\ 0 \leq i, j \leq m - 1}}\beta^{m - 1 - i} {\beta_1}^{m - 1 - j} \Big{)} x^l \\
        \end{aligned}
    \end{equation*} The coefficient of the $x^{m-1}$ term is 
    \begin{equation*}
        \displaystyle{a_m = \sum_{\substack{i + j = m - 1 \\ 0 \leq i, j \leq m - 1}}\beta^{m - 1 - i} {\beta_1}^{m - 1 - j} = \sum_{i = 0}^{m - 1} \beta^{m - 1 - i} {\beta_1}^i}.
    \end{equation*}
    
\noindent
We can then verify the base cases explicitly:
    \begin{equation*}
    \begin{aligned}
        p_1(x) & = \Psi_{1, \beta}(x) {\Psi_{1, \beta_1}(x)} = 1\cdot 1 =1  = a_1 \\
        p_2(x) & = \Psi_{2, \beta}(x) {\Psi_{2,\beta_1}(x)} = (x+\beta)(x+\beta_1)\\ &= x^2 + tx + n  \\&
        = a_1x^2 + a_2x^1 + a_1n^1x^0 \\
    \end{aligned}
    \end{equation*}

    With regard to the recursive relation of the $(a_m)$, we may compute:
    \begin{equation*}
    \begin{split}
        ta_m - na_{m - 1} & = (\beta + {\beta_1})  \sum_{\substack{i + j = m - 1 \\ 0 \leq i, j \leq m - 1}}\beta^{m - 1 - i}{\beta_1}^{m - 1 - j} \\
        &\text{\hspace*{.5in}}-  \beta{\beta_1}\sum_{\substack{i + j = m - 2 \\ 0 \leq i, j \leq m - 2}}\beta^{m - 2 - i}{\beta_1}^{m - 2 - j} \\
        & = \sum_{i = 0}^{m - 1} \beta^{m - i} {\beta_1}^i + \sum_{i = 0}^{m - 1} \beta^{m - 1 - i} {\beta_1}^{i + 1} - \sum_{i = 0}^{m - 2} \beta^{m - 1 - i} {\beta_1}^{i + 1} \\
        & = \sum_{i = 0}^{m - 1} \beta^{m - i} {\beta_1}^i + \beta_1^0{\beta}^m = \sum_{i = 0}^{m} \beta^{m - i} = a_{m + 1} \qquad 
    \end{split}
    \end{equation*}

    And with regard to the expression of $p_m$, we assume for induction that the identity holds for $p_m$ and expand $p_{m + 1}(x)$:

    \begin{equation*}
    \begin{aligned}
        p_{m + 1}(x) & = \sum_{l = 0}^{2m} \left( \sum_{\substack{i + j = l \\ 0 \leq i, j \leq m}}\beta^{m - i}{\beta_1}^{m - j} \right) x^l \allowdisplaybreaks[3]\\
        & = \sum_{l = m + 1}^{2m} \Big{(} \sum_{\substack{i + j = l \\ 1 \leq i, j \leq m}}\beta^{m - i}{\beta_1}^{m - j} \Big{)} x^l + \sum_{\substack{i + j = m \\ 0 \leq i, j \leq m}}\beta^{m - i}{\beta_1}^{m - j} x^m  \allowdisplaybreaks[3]\\
        & \text{\hspace*{1in}}+ \sum_{l = 0}^{m - 1} \Big{(} \sum_{\substack{i + j = l \\ 0 \leq i, j \leq m - 1}}\beta^{m - i}{\beta_1}^{m - j} \Big{)} x^l \allowdisplaybreaks[3] 
    \end{aligned}
    \end{equation*}
        
    \begin{equation*}
    \begin{aligned}
        \text{\hspace*{.5in}}
        & = x^2 \sum_{l = m - 1}^{2m - 2} \Big{(} \sum_{\substack{i + j = l \\ 0 \leq i, j \leq m - 1}}\beta^{m - 1 - i}{\beta_1}^{m - 1 - j} \Big{)} x^l + a_{m + 1}x^m \\
        & \text{\hspace*{1in}} + \beta {\beta_1}\sum_{l = 0}^{m - 1} \Big{(} \sum_{\substack{i + j = l \\ 0 \leq i, j \leq m - 1}}\beta^{m - 1 -i}{\beta_1}^{m - 1 - j} \Big{)} x^l \\
    \end{aligned}
    \end{equation*}
    \begin{equation*}
    \begin{aligned}
    \text{\hspace*{.2in}}&
    = n \Big{(} \sum_{i = 1}^{m - 1} a_{m - i}n^ix^{m - 1 - i} + a_mx^{m - 1} \Big{)} + a_{m + 1}x^m \\
        &\text{\hspace*{1in}}+ x^2 \Big{(} a_mx^{m - 1} + \sum_{i = 1}^{m - 1} a_ix^{2m - 1 - i} \Big{)} \\
        \end{aligned}
    \end{equation*}
    \begin{equation*}
    \begin{aligned}
        \text{\hspace*{.5in}}
        & = \sum_{i = 1}^{m} a_{m + 1 - i}n^ix^{m - i} + a_{m + 1}x^m + \sum_{i = 1}^{m} a_ix^{2m + 1 - i} 
    \end{aligned} 
    \end{equation*} 
\end{proof}

The following theorem 
will be a tool to study polynomials of the form $f(x^m)$ and prove Theorem~\ref{power-criterion}.

\begin{theorem} \label{general-Dickson}
    Use the same notation as in Theorem \ref{recursion}, and define the polynomial $g(x) := (x - \beta^m)(x - \beta_1^m)$. Then $g(x^m) = x^{2m} - D_m(t, n)x^m + n^m$, where $D_m$ is the $m$th Dickson polynomial of the first kind.
\end{theorem}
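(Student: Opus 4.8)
The plan is to expand $g(x^m)$ explicitly and then recognize the coefficients as Dickson polynomials via Waring's identity (Remark~\ref{Waring}). First I would write
\begin{equation*}
g(x^m) = (x^m - \beta^m)(x^m - \beta_1^m) = x^{2m} - (\beta^m + \beta_1^m)\,x^m + (\beta\beta_1)^m,
\end{equation*}
so the constant term is $(\beta\beta_1)^m = n^m$ on the nose, and the entire statement reduces to showing that the middle coefficient equals $D_m(t,n)$, i.e.\ that $\beta^m + \beta_1^m = D_m(\beta+\beta_1,\ \beta\beta_1)$.

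To establish this last identity I would induct on $m$. The base cases are $\beta + \beta_1 = t = D_1(t,n)$ and $\beta^2 + \beta_1^2 = (\beta+\beta_1)^2 - 2\beta\beta_1 = t^2 - 2n = D_2(t,n)$. For the inductive step, the elementary power-sum relation
\begin{equation*}
\beta^{k} + \beta_1^{k} = (\beta + \beta_1)(\beta^{k-1} + \beta_1^{k-1}) - \beta\beta_1(\beta^{k-2} + \beta_1^{k-2})
\end{equation*}
follows by multiplying out the right-hand side; substituting $t = \beta+\beta_1$, $n = \beta\beta_1$ and the inductive hypotheses for the exponents $k-1$ and $k-2$ converts it into $t\,D_{k-1}(t,n) - n\,D_{k-2}(t,n)$, which is exactly $D_k(t,n)$ by the defining recursion of the Dickson polynomials stated in the introduction. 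This closes the induction.

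If one prefers to stay within the framework of Theorem~\ref{recursion}, the same fact can be obtained by noting that $\beta^m + \beta_1^m = a_{m+1} - n\,a_{m-1}$ (expand $a_{m+1} = \sum_{i=0}^m \beta^{m-i}\beta_1^i$ and $n\,a_{m-1} = \sum_{i=1}^{m-1}\beta^{m-i}\beta_1^i$ and cancel), extending the recursion of Theorem~\ref{recursion} by setting $a_0 := 0$, and checking that $b_m := a_{m+1} - n\,a_{m-1}$ obeys the three-term Dickson recursion with $b_1 = t$ and $b_2 = t^2 - 2n$, whence $b_m = D_m(t,n)$ for all $m \geq 1$.

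There is no genuine obstacle here: the argument is a one-step induction together with a direct expansion. The only point needing a moment's care is the identification of the power-sum recurrence with the Dickson recurrence at the level of initial conditions---the coefficient sequence $a_m$ of Theorem~\ref{recursion} satisfies the very same recurrence $a_{m+2} = t\,a_{m+1} - n\,a_m$ but with the seeds $a_1 = 1$, $a_2 = t$, so it is the shifted combination $a_{m+1} - n\,a_{m-1}$, and not $a_m$ itself, that equals $D_m(t,n)$.
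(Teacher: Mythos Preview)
Your argument is correct, and it is a genuinely different---and shorter---route than the paper's. The paper does not expand $g(x^m)$ directly; instead it factors $x^m-\beta^m=(x-\beta)\Psi_{m,\beta}(x)$ and writes $g(x^m)=(x^2-tx+n)\,p_m(x)$, then feeds in the explicit coefficient formula for $p_m(x)$ from Theorem~\ref{recursion} to read off the $x^m$-coefficient as $b_m:=2na_{m-1}-ta_m$. It then checks that this $b_m$ satisfies the Dickson recursion with seeds $b_1=-t$, $b_2=2n-t^2$, concluding $b_m=-D_m(t,n)$. Your direct expansion bypasses all of that machinery: once you have $g(x^m)=x^{2m}-(\beta^m+\beta_1^m)x^m+n^m$, the power-sum recurrence is Waring's identity on the nose, and the induction is two lines. (Your alternative quantity $a_{m+1}-na_{m-1}$ is exactly $-b_m$ in the paper's notation, via $ta_m=a_{m+1}+na_{m-1}$.) The only thing the paper's longer route buys is that it exercises the full description of $p_m(x)$ established in Theorem~\ref{recursion}, which is needed anyway for the factorization step in the proof of Theorem~\ref{power-criterion}; for the present statement alone your approach is cleaner.
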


\begin{proof}
    \begin{equation*}
    \begin{split}
        g(x^m) & = (x^m - \beta^m)(x - \beta_1^m) = (x - \beta)\Psi_{m, \beta}(x)(x - \beta_1)\Psi_{m, \beta_1}(x) \\& = (x^2 - tx + n)p_m(x) \\
        & = (x^2 - tx + n) \left( \sum_{i = 1}^{m - 1} a_ix^{2m - 1 - i} + a_mx^{m - 1} + \sum_{i = 1}^{m - 1} a_{m - i}n^ix^{m - 1 - i} \right) \\
        & = a_1x^{2m} + (na_{m - 1} - ta_m + na_{m - 1})x^m + a_1n^m \\
        & = x^{2m} + (2na_{m - 1} - ta_m)x^m + n^m \\
    \end{split}
    \end{equation*}

    \noindent
    We define 
    \begin{equation*}
       b_m = b_m(\beta, \beta_1)= 2na_{m - 1} - ta_m, 
    \end{equation*} 
    Then $g(x^m)= x^{2m} + b_mx^m + n^m$. We observe now that:
    \begin{equation*}
    \begin{split}
        tb_{m + 1} - nb_m & = t(2na_m - ta_{m + 1}) - n(2na_{m - 1} - ta_m) \\
        & = 2n(ta_m - na_{m - 1}) - t(ta_{m + 1} - na_m) \\&= 2na_{m + 1} - ta_{m + 2} \\&= b_{m + 2}
    \end{split}
    \end{equation*}

    Also, $b_1 = -t$ and $b_2 = 2n - t^2$. (Note: we define $a_0 := 0$ so that $b_1 = -t$, in agreement with the direct expansion of $g(x^m)$.) So indeed $b_m = -D_m(t, n)$, and as claimed, $g(x^m) = x^{2m} - D_m(t, n)x^m + n^m$.
\end{proof}

We now use Theorem \ref{general-Dickson} and Capelli's Theorem to study polynomials of the form $f(x^m)$ and prove Theorem~\ref{power-criterion}.

\begin{proof}[Proof of Theorem~\ref{power-criterion}]
    If there exists $\beta \in F(\sqrt{\Delta})$ with $\beta^m = \alpha$, $x^m  - \alpha = (x - \beta)\Psi_m(x)$ is a valid factorization of $x^m - \alpha \in F(\sqrt{\Delta})[x]$. We now identify the symbol $\beta_1$ from Theorems \ref{recursion} and \ref{general-Dickson} with the conjugate of $\beta$, namely $\mathrm{N}(\beta)/\beta \in F(\sqrt{\Delta})$. From Capelli's Theorem (and the multiplicativity of norms), this factorization induces the $F[x]$-factorization $$x^{2m} + bx^m + c = \mathrm{N}(x - \beta)\mathrm{N}(\Psi_{m, \beta}(x)) = (x^2 - tx + n)p_m(x)$$ Equating coefficients with the expression $f(x^m) = x^{2m} - D_m(t, n)x^m + n^m$ from Theorem \ref{general-Dickson} gives the desired equality:
    $$b = -D_m(t, n), c = n^m $$ with $$t = \mathrm{Tr}(\beta), n = \mathrm{N}(\beta) \in F$$
  
\noindent
Conversely, if $\exists n, t \in F$ with the given conditions, then by Theorem \ref{general-Dickson}, $$f(x^m) = (x^2 - tx + n)p_m(x)$$ By Capelli's Theorem, $(x^2 - tx + n)$ is the product of the norms of some irreducible polynomials in $F(\alpha) = F(\sqrt{\Delta})$ that divide $x^m - \alpha$. $\alpha$ is quadratic, so $x^2 - tx + n = \mathrm{N}(x - \beta)$ for some $(x - \beta) \mid (x^m - \alpha)$, and $\alpha = \beta^m \in F(\sqrt{\Delta})^m$.
\end{proof}

The criterion for reducibility, Theorem \ref{reducibility-criterion}, follows as a corollary.

\begin{proof}[Proof of Theorem \ref{reducibility-criterion}]
    We use the notation common to Theorems \ref{reducibility-criterion} and \ref{power-criterion}. If $f$ is reducible, so is $h = f(x^m)$. If $f$ is irreducible, from Capelli's Theorem, $h$ is reducible if and only if $x^m - \alpha$ is reducible in $F(\alpha)$, where $\alpha$ is a root of $f$. From Theorem~\ref{cyclotomic}, this occurs if and only if $\alpha \in F(\alpha)^p$ for some $p \mid m$ prime, or $\alpha \in -4F(\alpha)^4$ and $4 \mid m$. From Theorem \ref{power-criterion}, $\alpha \in F(\alpha)^p$ if and only if there exist $n, t \in F$ with $c = n^p$ and $b = -D_p(t, n)$. If $\alpha = -4\beta^4$ for some $\beta \in F(\alpha)$, denote the conjugate of $\beta$ over $F$ by $\beta_1$, with $\alpha_1 = -4\beta_1^4$. Then
    
    \begin{equation*}
    \begin{aligned}
        x^4 + bx^2 + c & = (x^2 - \alpha)(x^2 - \alpha_1) \\&= (x^2 + 4\beta^4)(x^2 + 4\beta_1^4) \\
        & = x^4 + 4(\beta^4 + \beta_1^4)x^2 + 16\beta^4\beta_1^4 \\
        &= x^4 + 4D_4(\mathrm{Tr}(\beta), \mathrm{N}(\beta)) + 16(\mathrm{N}(\beta))^4\\
    \end{aligned}
    \end{equation*}
    
\noindent
By Waring's identity (Remark \ref{Waring}). And indeed $t := \mathrm{Tr}(\beta) \in F$, $n := \mathrm{N}(\beta) \in F$. Conversely, if such $t, n$ exist, we observe that
    \begin{equation*}
    \begin{split}
        x^8 + bx^4 + c & = x^8 + 4D_4(t, n) + 16n^4 \\& = x^8 + (4t^4 - 16nt^2 + 8n^2)x^4 + 16n^4 \\
        & = (x^4 + (2t^2 - 4n)x^2 - 4n^2)(x^4 - (2t^2 - 4n)x^2 - 4n^2) \\
    \end{split}
    \end{equation*}

\noindent
So $f(x^4)$ and therefore $f(x^m)$ are reducible.
\end{proof}

\begin{remark}
    We note that the criterion of Theorem \ref{reducibility-criterion} is very similar to parts (vi) and (vii) of Theorem 6 of Schinzel \cite{schinzel-dissertation}. 
    It is quite possible that Theorem \ref{reducibility-criterion} is encompassed by the results in   \cite{schinzel-dissertation} on the reducibility of trinomials. However, those results utilize elliptic curves and the proof of Theorem \ref{reducibility-criterion} does not. 
\end{remark}

\section{Properties of $h = x^{2p} + bx^p + c^p \in \Q[x]$ and $d_h = D_p(x, c) + b$}\label{sect-h}

We now restrict ourselves to the case when $F = \mathbb{Q}$, $m= p > 3$ is prime, and the constant term of $f$ is the form $c^p$. With 
\begin{equation*}
\begin{aligned}
    f(x)&=x^2+bx+c^p\\
    h(x)&= f(x^p) = x^{2p} + bx^p + c^p
\end{aligned}
\end{equation*}
we assume in this section and the next 
that  $h(x)$ is irreducible. By Theorem \ref{reducibility-criterion}(1), $\Delta := b^2 - 4c^p$ is not a rational square and the splitting field of $f$ is $\mathbb{Q}(\sqrt{\Delta})$. Also,  by Theorem~\ref{reducibility-criterion}(2), the irreducibility of $h$ implies that $b\neq -D_p(t, c)$ for any $t\in \Q$.  Hence the polynomial $d_h:= D_p(x, c) + b$ has no rational roots.

Let $\zeta_n$ be a primitive $n$th root of unity and let 
\begin{equation*}
F_n = \Q(\zeta_n)
\end{equation*} be the $n$th cyclotomic field. We will denote the splitting field of $h$ by $K$ and the splitting field of $d_h$ by $L$. We denote the roots of $f$ by $\alpha, \alpha_1$. We then choose $\beta$ so that $\beta^p = \alpha$ and $\beta_1 := \frac{c}{\beta}$ (and indeed $\beta_1^p = \frac{c^p}{\beta^p} = \frac{c^p}{\alpha} = \alpha_1$). $h$ has $2p$ roots, which are precisely $$\{\zeta_p^i\beta \}_{i \in \mathbb{Z}_p} \cup \{\zeta_p^i \beta_1\}_{i \in \mathbb{Z}_p}$$ We claim now:

\begin{lemma} \label{B_i-distinct}
    $\mathbb{Q}(\zeta_p^i\beta) = \mathbb{Q}(\zeta_p^{-i}\beta_1)$, and moreover, these are the only roots of $h(x)$ in this extension of $\mathbb{Q}$.
\end{lemma}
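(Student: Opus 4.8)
The plan is to treat the two assertions separately; the field equality is formal, and the ``only roots'' statement is a short degree count.

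For the first part, I would use that $\beta_1 = c/\beta$ with $c \in \Q^{\times}$ --- note $c \neq 0$, since $c = 0$ would force $h(x) = x^p(x^p + b)$, contradicting irreducibility. Then $(\zeta_p^i\beta)(\zeta_p^{-i}\beta_1) = \beta\beta_1 = c \in \Q^{\times}$, so $\zeta_p^{-i}\beta_1 = c\,(\zeta_p^i\beta)^{-1} \in \Q(\zeta_p^i\beta)$ and symmetrically $\zeta_p^i\beta \in \Q(\zeta_p^{-i}\beta_1)$, giving the equality of the two fields. Call this common field $M$.

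For the second part, since $h$ is irreducible of degree $2p$ and $\zeta_p^i\beta$ is a root of $h$, we have $[M:\Q] = 2p$. The full set of roots of $h$ is $\{\zeta_p^j\beta : j \in \Z/p\Z\} \cup \{\zeta_p^j\beta_1 : j \in \Z/p\Z\}$. If $\zeta_p^j\beta \in M$ for some $j$, then $\zeta_p^{j-i} = (\zeta_p^j\beta)/(\zeta_p^i\beta) \in M$; if $j \neq i$ this is a primitive $p$-th root of unity ($p$ prime), whence $F_p = \Q(\zeta_p) \subseteq M$ and $(p-1) \mid 2p$. But $2p \equiv 2 \pmod{p-1}$, so this would force $p - 1 \mid 2$, impossible for $p > 3$; hence $j = i$. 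Similarly, if $\zeta_p^j\beta_1 \in M$, then combining with $\zeta_p^{-i}\beta_1 \in M$ from the first part gives $\zeta_p^{j+i} \in M$, and the same argument forces $j = -i$. Thus the only roots of $h$ lying in $M$ are $\zeta_p^i\beta$ and $\zeta_p^{-i}\beta_1$; these are genuinely distinct, since $\zeta_p^i\beta = \zeta_p^{-i}\beta_1$ would give $\alpha = \beta^p = \beta_1^p = \alpha_1$, contradicting that $\Delta = b^2 - 4c^p$ is a non-square.

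The argument is essentially immediate once set up; the one place care is needed --- and the only place the hypothesis $p > 3$ is used --- is the elementary fact $p - 1 \nmid 2p$ for primes $p > 3$, which is precisely the obstruction that makes $p = 2, 3$ exceptional (as already noted in the remarks following Theorem~\ref{galois-groups}). Everything else relies only on the relation $\beta\beta_1 = c \in \Q$ and the irreducibility of $h$.
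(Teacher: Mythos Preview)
Your proof is correct and follows essentially the same approach as the paper's: both obtain the field equality from $\beta_1 = c/\beta$, and both rule out extra roots by noting that a stray root would force $F_p \subseteq \Q(\zeta_p^i\beta)$ and hence $p-1 \mid 2p$, impossible for $p>3$. You are merely a bit more explicit than the paper---treating the $\zeta_p^j\beta$ and $\zeta_p^j\beta_1$ cases separately rather than invoking ``without loss of generality,'' and recording that $c\neq 0$ and that the two named roots are distinct.
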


\begin{proof}
    By construction $\zeta_p^{-i}\beta_1 = \zeta_p^{-i} \frac{c}{\beta} = \frac{c}{\zeta_p^i\beta}$, so $\Q(\zeta_p^i \beta) = \mathbb{Q}(\zeta_p^{-i}\beta_1)$. Suppose that some other root of $h$ lies in $\mathbb{Q}(\zeta_p^i\beta)$, without loss of generality, $\zeta_p^j\beta$. Then $\zeta_p^{i - j} \in \mathbb{Q}(\zeta_p^i\beta)$. If $i \neq j$, $F_p \subset \mathbb{Q}(\beta)$. This is impossible because $[F_p : \mathbb{Q}] = p - 1$ and $[\mathbb{Q}(\beta): \mathbb{Q}] = 2p$, but $p - 1 \nmid 2p$ given that $p > 3$. So $i = j$, and $\zeta_p^j \beta = \zeta_p^i\beta$, which is one of the given roots after all.
\end{proof}

As a corollary, there are no double roots: $\alpha \neq \alpha_1$ else $f$ is reducible, and the only pairs of roots which generate the same extension are of the form $\zeta_p^i\beta, \zeta_p^{-i}\beta_1$, whose $p$th powers are $\alpha, \alpha_1$, respectively.

Now denote $B_i := \mathbb{Q}(\zeta_p^i\beta) = \mathbb{Q}(\zeta_p^{-i}\beta_1)$ and $\mathcal{B} := \{B_i\}_{i \in \mathbb{Z}_p}$, with $|\mathcal{B}| = p$. Clearly, $[B_i : \mathbb{Q}] = 2p$. Also, $\zeta_p^i\beta + \zeta_p^{-i}\beta_1 \in B_i$.  We show that it is a root of $d_h$.

\begin{lemma}
    $d_h(\zeta_p^i\beta + \zeta_p^{-i}\beta_1) = 0$.
\end{lemma}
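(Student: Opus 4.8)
The plan is to apply Waring's identity (Remark~\ref{Waring}) directly. Set $\gamma := \zeta_p^i\beta$ and $\gamma_1 := \zeta_p^{-i}\beta_1$, so that $\gamma + \gamma_1 = \zeta_p^i\beta + \zeta_p^{-i}\beta_1$ is the element we wish to feed into $d_h$. The first step is to compute the two elementary symmetric quantities in $\gamma, \gamma_1$ that Waring's identity requires. Since $\beta_1 = c/\beta$, we get $\gamma\gamma_1 = \zeta_p^i\beta \cdot \zeta_p^{-i}\beta_1 = \beta\beta_1 = c$, the root parameter appearing in $D_p(x,c)$. Likewise, because $\zeta_p^p = 1$, we have $\gamma^p = \zeta_p^{ip}\beta^p = \beta^p = \alpha$ and $\gamma_1^p = \zeta_p^{-ip}\beta_1^p = \beta_1^p = \alpha_1$.

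Next I would invoke Waring's identity with exponent $k = p$:
\begin{equation*}
\gamma^p + \gamma_1^p = D_p(\gamma + \gamma_1,\ \gamma\gamma_1) = D_p(\zeta_p^i\beta + \zeta_p^{-i}\beta_1,\ c).
\end{equation*}
On the left-hand side, $\gamma^p + \gamma_1^p = \alpha + \alpha_1$, and since $\alpha, \alpha_1$ are the two roots of $f(x) = x^2 + bx + c^p$, Vieta's formula gives $\alpha + \alpha_1 = -b$. Combining these, $D_p(\zeta_p^i\beta + \zeta_p^{-i}\beta_1, c) = -b$, hence $D_p(\zeta_p^i\beta + \zeta_p^{-i}\beta_1, c) + b = 0$, which is exactly the assertion $d_h(\zeta_p^i\beta + \zeta_p^{-i}\beta_1) = 0$.

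There is essentially no obstacle here; the statement is a one-line consequence of Waring's identity once $\gamma, \gamma_1$ are identified correctly, and the only thing to be careful about is that $\gamma\gamma_1$ really is $c$ (not $c^p$) and that the $\zeta_p$ factors vanish upon taking $p$th powers. It may be worth remarking that this gives $p$ roots of $d_h$ as $i$ ranges over $\mathbb{Z}_p$, and — anticipating later arguments — that distinct $i$ can yield the same value $\zeta_p^i\beta + \zeta_p^{-i}\beta_1$ only in degenerate situations, though that need not be addressed in this lemma.
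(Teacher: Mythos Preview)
Your proof is correct and follows essentially the same idea as the paper: both reduce to the identity $\gamma^p + \gamma_1^p = D_p(\gamma+\gamma_1,\gamma\gamma_1)$ applied to $\gamma=\zeta_p^i\beta$, $\gamma_1=\zeta_p^{-i}\beta_1$. The paper packages this by applying Theorem~\ref{general-Dickson} to $f(x^p)=(x^p-\gamma^p)(x^p-\gamma_1^p)$ and reading off the $x^p$-coefficient, while you invoke Waring's identity (Remark~\ref{Waring}) directly together with Vieta's formula $\alpha+\alpha_1=-b$; the content is the same.
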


\begin{proof}
    By factoring $f$, we see that: $$x^{2p} + bx^p + c^p = f(x^p) = (x^p - (\zeta_p^i\beta)^p)(x^p - (\zeta_p^{-i}\beta_1)^p)$$ And by Theorem \ref{general-Dickson}: $$f(x^p) = x^{2p} - D_p(\zeta_p^i\beta + \zeta_p^{-i}\beta_1, c)x^p + c^p$$ Equating coefficients, $-D_p(\zeta_p^i\beta + \zeta_p^{-i}\beta_1, c) = b$, i.e., $d_h(\zeta_p^i\beta + \zeta_p^{-i}\beta_1) = 0$.
\end{proof}

We now define the fields $D_i := \mathbb{Q}(\zeta_p^i\beta + \zeta_p^{-i}\beta_1)$, with $D_i \subset B_i$, and define \begin{equation*}
\mathcal{D} := \{D_i\}_{i \in \mathbb{Z}_p}
\end{equation*}
\textit{A priori}, we do not know that the fields $D_i$ are distinct, but for now, it suffices that the distinct symbols $D_i$ are in bijective correspondence with $\mathbb{Z}_p$ and $\mathcal{B}$. Before examining the fields $D_i$, it will be useful to prove that:

\begin{lemma} \label{transitivity}
    $\mathrm{Gal}(K / \mathbb{Q})$ acts transitively and equivalently on $\mathcal{B}$, on $\mathcal{D}$, and on the roots of $d_h$.
\end{lemma}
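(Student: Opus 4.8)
The plan is to reduce everything to the transitivity of $\mathrm{Gal}(K/\Q)$ on the set of roots $\{\zeta_p^i\beta\} \cup \{\zeta_p^i\beta_1\}$ of $h$, which holds precisely because $h$ is irreducible. First I would note that since $h$ is irreducible of degree $2p$, the Galois group $G := \mathrm{Gal}(K/\Q)$ acts transitively on the $2p$ roots of $h$. Now any $\sigma \in G$ permutes the roots of $h$, hence permutes the splitting-field-internal fields $B_i = \Q(\zeta_p^i\beta) = \Q(\zeta_p^{-i}\beta_1)$: indeed $\sigma(B_i) = \Q(\sigma(\zeta_p^i\beta))$, and by Lemma~\ref{B_i-distinct} this equals $B_j$ where $\zeta_p^j\beta$ (equivalently $\zeta_p^{-j}\beta_1$) is the unique pair of roots generating $\sigma(B_i)$. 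This gives a well-defined action of $G$ on $\mathcal{B}$, and I would argue it is transitive: given $B_i, B_j$, pick $\sigma \in G$ with $\sigma(\zeta_p^i\beta) = \zeta_p^j\beta$ (possible by transitivity on roots of $h$), so $\sigma(B_i) = B_j$.

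Next I would transfer this to $\mathcal{D}$ and to the roots of $d_h$ simultaneously. The key observation is that the assignment $B_i \mapsto D_i \mapsto (\zeta_p^i\beta + \zeta_p^{-i}\beta_1)$ is equivariant in the following sense: if $\sigma(\zeta_p^i\beta) = \zeta_p^j\beta$, then since $\sigma$ is a field automorphism fixing $c$ and $\sigma(\zeta_p^{-i}\beta_1) = \sigma(c/(\zeta_p^i\beta)) = c/(\zeta_p^j\beta) = \zeta_p^{-j}\beta_1$, we get $\sigma(\zeta_p^i\beta + \zeta_p^{-i}\beta_1) = \zeta_p^j\beta + \zeta_p^{-j}\beta_1$, hence $\sigma(D_i) = D_j$. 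Because $D_i \subset B_i$ and (by the remark following Lemma~\ref{B_i-distinct}) distinct indices give distinct $B_i$, the map $i \mapsto D_i$ factors through the bijection $i \leftrightarrow B_i$, so the $G$-action on the symbols $D_i$ is literally the same permutation action as on $\mathcal{B}$, and likewise the action on the values $\zeta_p^i\beta + \zeta_p^{-i}\beta_1$. Since $d_h$ has degree $p$ and we have exhibited $p$ roots $\zeta_p^i\beta + \zeta_p^{-i}\beta_1$ (indexed by $i \in \Z_p$) — and these are distinct because $D_i \subset B_i$ forces distinct generators to lie in distinct fields, or more directly because $d_h$ has no rational roots and one checks the $p$ values are pairwise distinct — these are exactly all the roots of $d_h$. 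Transitivity on all three sets, with the same permutation, then follows from transitivity on $\mathcal{B}$.

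The one genuinely delicate point — the main obstacle — is making precise the claim that the action on $\mathcal{D}$ is "equivalent" to the action on $\mathcal{B}$, i.e.\ that we really have the same abstract $G$-set, rather than merely two transitive actions of possibly different stabilizers. This rests on the injectivity of $i \mapsto B_i$ (established) together with the fact that $\sigma(B_i) = B_j$ if and only if $\sigma(D_i) = D_j$ if and only if $\sigma$ sends the root $\zeta_p^i\beta + \zeta_p^{-i}\beta_1$ of $d_h$ to $\zeta_p^j\beta + \zeta_p^{-j}\beta_1$; I would state this as a chain of "if and only if"s driven by the computation above and by the observation that $\sigma(\zeta_p^i\beta)$ determines $\sigma$ on all of $B_i$ and determines which $B_j$ is the image. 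I do not expect that the $D_i$ are yet known to be distinct as \emph{fields}, but that is not needed: what is needed is that the $p$ \emph{numbers} $\zeta_p^i\beta+\zeta_p^{-i}\beta_1$ are distinct (so that $d_h$, of degree $p$, is accounted for), and this follows since $\zeta_p^i\beta + \zeta_p^{-i}\beta_1 = \zeta_p^j\beta + \zeta_p^{-j}\beta_1$ would force a nontrivial $\Q$-linear relation among $\{\zeta_p^k\beta, \zeta_p^k\beta_1\}$, contradicting that these $2p$ elements are the roots of the irreducible $h$ and hence linearly independent over $\Q$ in the appropriate sense; alternatively one invokes Lemma~\ref{B_i-distinct} directly. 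Once distinctness is in hand, the proof is a short bookkeeping argument from irreducibility of $h$.
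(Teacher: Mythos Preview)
Your approach is essentially the paper's: use transitivity of $G$ on the roots of $h$ to get transitivity on $\mathcal{B}$, then observe that $\sigma$ sends the \emph{pair} of roots in $B_i$ to the pair in $B_j$, hence sends their sum to the corresponding sum, which gives the equivalent action on the roots of $d_h$ and on $\mathcal{D}$.

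Two small issues. First, your equivariance computation only treats the case $\sigma(\zeta_p^i\beta)=\zeta_p^j\beta$; you must also allow $\sigma(\zeta_p^i\beta)=\zeta_p^{-j}\beta_1$, in which case $\sigma(\zeta_p^{-i}\beta_1)=c/\sigma(\zeta_p^i\beta)=\zeta_p^{j}\beta$ and the sum is again $\zeta_p^j\beta+\zeta_p^{-j}\beta_1$. (The paper handles both cases at once by saying $\sigma$ carries the pair in $B_i$ to the pair in $B_j$.) Second, your digression on distinctness is unnecessary here---the paper explicitly treats the $D_i$ as symbols indexed by $\Z_p$ at this stage and defers distinctness to the next lemma---and your proposed argument for it is wrong: roots of an irreducible polynomial are \emph{not} in general $\Q$-linearly independent (e.g.\ $x^2-2$), so that step does not go through, and Lemma~\ref{B_i-distinct} does not directly give distinctness of the sums either. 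Drop that paragraph and the proof is clean.
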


\begin{proof}
    Let $\sigma \in \mathrm{Gal}(K / \mathbb{Q})$. $\sigma$ permutes the roots of $h$, and from Lemma \ref{B_i-distinct}, it must be that $\sigma(\zeta_p^i\beta) \in B_j$ for some $j$. Then $\sigma(B_i) = B_j$, and $\sigma$ acts on $\mathcal{B}$. $\mathrm{Gal}(K / \mathbb{Q})$ is transitive on the roots of irreducible $h$, so its action on $\mathcal{B}$ is also transitive. $\sigma$ maps the pair of roots in $B_i$ to the pair of roots in $B_j$, so the action on $\mathcal{B}$ is equivalent to the action on the set of pairs of roots of $h$. Then: $$\sigma(\zeta_p^i\beta + \zeta_p^{-i}\beta_1) = \zeta_p^j\beta + \zeta_p^{-j}\beta_1$$ So $\sigma(D_i) = D_j$. Therefore, $\sigma$ acts equivalently on the roots of $d$, on $\mathcal{D}$, and on $\mathcal{B}$, according to the correspondence $\zeta_p^i\beta + \zeta_p^{-i}\beta_1 \in D_i \subset B_i$, and these equivalent actions of $\mathrm{Gal}(K / \mathbb{Q})$ are transitive.
\end{proof}

We can now prove the following properties about the fields $D_i$.

\begin{lemma} \label{D_i-properties}
    $d_h(x) = D_p(x, c) + b$ is irreducible, so $D_i \simeq \mathbb{Q}[x]/(d_h(x))$. Also, $B_i = D_i(\sqrt{\Delta})$, and $D_i = D_j$ if and only if $i = j$.
\end{lemma}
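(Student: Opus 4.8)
The plan is to pin down the roots of $d_h$ explicitly and then read off all three assertions from Lemmas~\ref{B_i-distinct} and~\ref{transitivity} together with a few degree counts. Since $D_p(x,c)$ is monic of degree $p$, so is $d_h$, and the lemma immediately preceding this one shows that $\gamma_i := \zeta_p^i\beta + \zeta_p^{-i}\beta_1$ is a root of $d_h$ for every $i \in \mathbb{Z}_p$. Hence the one thing that must be checked before everything else falls out is that the $p$ elements $\gamma_i$ are pairwise distinct; once that is known, $d_h(x) = \prod_{i \in \mathbb{Z}_p}(x - \gamma_i)$, and in particular $d_h$ is separable. Note also that $c \neq 0$, since otherwise $h(x) = x^p(x^p + b)$ would be reducible; consequently $\alpha\alpha_1 = c^p \neq 0$, so $\alpha, \alpha_1 \neq 0$, and $\alpha \neq \alpha_1$ because $f$ is irreducible (equivalently $\Delta \neq 0$).

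To prove distinctness, suppose $\gamma_i = \gamma_j$ with $i \neq j$ in $\mathbb{Z}_p$. Using $\beta_1 = c/\beta$ and clearing denominators gives $(\zeta_p^i - \zeta_p^j)\beta^2 = (\zeta_p^{-j} - \zeta_p^{-i})c$. Since $\zeta_p^{-j} - \zeta_p^{-i} = \zeta_p^{-i-j}(\zeta_p^i - \zeta_p^j)$ and $\zeta_p^i \neq \zeta_p^j$, this forces $\beta^2 = c\,\zeta_p^{-i-j}$, and raising to the $p$th power yields $\alpha^2 = \beta^{2p} = c^p = \alpha\alpha_1$. As $\alpha \neq 0$ this gives $\alpha = \alpha_1$, contradicting the irreducibility of $f$. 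The same manipulation applied to a putative identity $\zeta_p^j\beta = \zeta_p^{-i}\beta_1$ is exactly what makes $|\mathcal{B}| = p$, which I will use below.

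For the irreducibility of $d_h$: by Lemma~\ref{transitivity}, $\mathrm{Gal}(K/\mathbb{Q})$ acts transitively on the roots of $d_h$, which by the previous paragraph form a set of size $p$. Hence the Galois orbit of $\gamma_0$, i.e.\ its set of $\mathbb{Q}$-conjugates in $K$, has size $p$, so $[\mathbb{Q}(\gamma_0):\mathbb{Q}] = p = [D_0 : \mathbb{Q}]$. The minimal polynomial of $\gamma_0$ over $\mathbb{Q}$ is then a monic degree-$p$ divisor of the monic degree-$p$ polynomial $d_h$, so $d_h$ equals that minimal polynomial and is irreducible; therefore $D_i \simeq \mathbb{Q}[x]/(d_h(x))$ for every $i$.

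Finally, $\alpha = (\zeta_p^i\beta)^p \in B_i$, so the splitting field $\mathbb{Q}(\sqrt{\Delta}) = \mathbb{Q}(\alpha)$ of $f$ lies in $B_i$, giving $D_i(\sqrt{\Delta}) \subseteq B_i$. Since $\Delta$ is not a rational square (Theorem~\ref{reducibility-criterion}(1)) and $2 \nmid p = [D_i:\mathbb{Q}]$, we cannot have $\sqrt{\Delta} \in D_i$; hence $[D_i(\sqrt{\Delta}):\mathbb{Q}] = 2p = [B_i:\mathbb{Q}]$ and $B_i = D_i(\sqrt{\Delta})$. For the last claim, if $D_i = D_j$ then $B_i = D_i(\sqrt{\Delta}) = D_j(\sqrt{\Delta}) = B_j$, which forces $i = j$ because $|\mathcal{B}| = p$. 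There is no deep obstacle in any of this; the only point that is an argument rather than a degree count is the distinctness of the $\gamma_i$ in the second paragraph, and that is the two-line computation above — any attempt to sidestep it via the $\mathcal{D} \leftrightarrow \mathcal{B}$ bijection would be circular, since that bijection relies on $B_i = D_i(\sqrt{\Delta})$.
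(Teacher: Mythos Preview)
Your proof is correct, and the last two steps ($B_i = D_i(\sqrt{\Delta})$ and distinctness of the $D_i$) match the paper's argument exactly. The route to irreducibility of $d_h$, however, is genuinely different. You first prove directly that the $p$ elements $\gamma_i = \zeta_p^i\beta + \zeta_p^{-i}\beta_1$ are pairwise distinct via the short algebraic manipulation leading to $\alpha = \alpha_1$, and then invoke the transitive Galois action from Lemma~\ref{transitivity} to conclude $[D_0:\mathbb{Q}] = p$. The paper instead argues by elimination on $[D_i:\mathbb{Q}] \in \{1,2,p\}$: degree $1$ is ruled out because the irreducibility of $h$ forces $d_h$ to have no rational root (via Theorem~\ref{reducibility-criterion}), and degree $2$ is ruled out by observing that transitivity would then make every root quadratic, so $d_h$ would factor into quadratics, contradicting that $p$ is odd. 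Your approach is more self-contained, since it does not need to call back to Theorem~\ref{reducibility-criterion} for the ``no rational root'' fact, and it yields distinctness of the $\gamma_i$ as a byproduct rather than deducing it at the end from $B_i = D_i(\sqrt{\Delta})$; the paper's approach trades that computation for a short case analysis. One small quibble: your aside that ``the same manipulation \dots\ is exactly what makes $|\mathcal{B}| = p$'' is not quite how the paper establishes $|\mathcal{B}| = p$ --- Lemma~\ref{B_i-distinct} instead uses the degree obstruction $p-1 \nmid 2p$ --- but this is incidental to your main argument.
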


\begin{proof}
    $$x^{2p} + bx^p + c^p = f(x^p) = (x^p - \alpha)(x^p - \alpha_1) = (x^p - (\zeta_p^i\beta)^p)(x^p - (\zeta_p^{-i}\beta_1)^p)$$ By Theorem \ref{general-Dickson}, $b = -D_p(\zeta_p^i\beta + \zeta_p^{-i}\beta_1, c)$, so $\zeta_p^i\beta + \zeta_p^{-i}\beta_1 \in B_i$ is a root of $d_h$. Thus, $[D_i : \mathbb{Q}] \leq p$, and since $D_i \subset B_i$, $[D_i : \mathbb{Q}] \mid 2p$. Thus, $[D_i : \mathbb{Q}]$ could be $1$, $2$, or $p$. $[D_i : \mathbb{Q}] \neq 1$ because this would give a rational root for $d_h$, a contradiction.
    
    Now assume temporarily that $[D_i : \mathbb{Q}] = 2$. Then $D_i(\sqrt{\Delta}) \subset B_i$ is an extension of either degree 2 or degree 4. $4 \nmid 2p$ given $p > 3$, so $[D_i(\sqrt{\Delta}) : \mathbb{Q}] = 2$, and $D_i = \mathbb{Q}(\sqrt{\Delta})$. By the transitive action of the Galois group, all $D_i$ are isomorphic and are thus quadratic extensions. So each root of $d_h$ is a root of a quadratic factor, and $d_h$ factors as a product of quadratics. But this would make $\deg(d_h) = p$ even, a contradiction. So $[D_i : \mathbb{Q}] = p$ after all, and consequently, $d_h(x)$ is irreducible with $D_i \simeq \mathbb{Q}[x]/(d_h(x))$, as claimed. 
    
    Then $\sqrt{\Delta} \notin D_i$ because $2 \nmid p$, so $[D_i(\sqrt{\Delta}) : \mathbb{Q}] = 2p = [B_i : \mathbb{Q}]$ and $D_i(\sqrt{\Delta}) \subset B_i$, so $D_i(\sqrt{\Delta}) = B_i$. If $i \neq j$ but $D_i = D_j$, then $B_i = B_j$, a contradiction of Lemma \ref{B_i-distinct}. So $D_i = D_j$ if and only if $i = j$, as desired.
\end{proof}

As a technical lemma, we must now note that: 

\begin{lemma} \label{splitting-degree}
    $K = F_p(\beta)$. $[K : \Q] = 2p(p - 1)$ if $\sqrt{\Delta} \notin F_p$ and $[K : \Q] = p(p - 1)$ if $\sqrt{\Delta} \in F_p$.
\end{lemma}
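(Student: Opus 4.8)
The plan is to identify the splitting field $K$ explicitly and then compute $[K:\Q]$ by a compositum argument. First I would observe that the $2p$ roots of $h$, namely $\zeta_p^i\beta$ and $\zeta_p^i\beta_1 = \zeta_p^i c/\beta$ for $i\in\Z_p$, all lie in $F_p(\beta)$, so $K\subseteq F_p(\beta)$; conversely $\beta$ and $\zeta_p\beta$ are roots of $h$ and hence lie in $K$, so $\zeta_p\in K$, which gives $F_p\subseteq K$ and then $F_p(\beta)\subseteq K$. Thus $K=F_p(\beta)$.

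Next, since $h$ is irreducible of degree $2p$ with root $\beta$ we have $[\Q(\beta):\Q]=2p$, and $[F_p:\Q]=p-1$. Because $F_p/\Q$ is Galois, $[K:\Q]=[F_p(\beta):\Q]=[F_p:\Q]\,[\Q(\beta):\Q]/[F_p\cap\Q(\beta):\Q]=2p(p-1)/d$, where $d:=[F_p\cap\Q(\beta):\Q]$ divides both $2p$ and $p-1$; since $\gcd(2p,p-1)=2$ (using that $p$ is odd), we get $d\in\{1,2\}$. It then remains only to decide which case occurs in terms of $\sqrt{\Delta}$.

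To that end I would show that $\Q(\sqrt{\Delta})$ is the unique quadratic subfield of $\Q(\beta)$: from $\beta^p=\alpha$ we have $\Q(\alpha)=\Q(\sqrt{\Delta})\subseteq\Q(\beta)$ with $[\Q(\beta):\Q(\alpha)]=p$ prime, so for any quadratic $E\subseteq\Q(\beta)$ the compositum $E\cdot\Q(\alpha)$ is an intermediate field of $\Q(\beta)/\Q(\alpha)$ of degree at most $4<2p$, hence $E\cdot\Q(\alpha)=\Q(\alpha)$ and $E=\Q(\alpha)$. Consequently $d=2$ holds precisely when $F_p\cap\Q(\beta)=\Q(\sqrt{\Delta})$, i.e.\ precisely when $\sqrt{\Delta}\in F_p$, and $d=1$ otherwise. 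This yields $[K:\Q]=2p(p-1)$ when $\sqrt{\Delta}\notin F_p$ and $[K:\Q]=p(p-1)$ when $\sqrt{\Delta}\in F_p$, as claimed. The computation is essentially routine; the only delicate point is the last step — analyzing the subfields of the possibly non-Galois extension $\Q(\beta)$ — where the primality of $[\Q(\beta):\Q(\alpha)]$ and the inequality $2p>4$ (so $p>2$) are exactly what make the argument work.
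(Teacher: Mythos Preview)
Your proof is correct and follows the same compositum strategy as the paper: both establish $K=F_p(\beta)$ identically and then bound $[K:\Q]$ using $\gcd(2p,p-1)=2$. The only difference is in resolving the dichotomy: the paper passes to the base field $\Q(\sqrt{\Delta})$ and uses that $[F_p(\sqrt{\Delta}):\Q(\sqrt{\Delta})]$ (equal to $\tfrac{p-1}{2}$ or $p-1$ according as $\sqrt{\Delta}\in F_p$ or not) is coprime to $[B_0:\Q(\sqrt{\Delta})]=p$, whereas you compute the intersection $F_p\cap\Q(\beta)$ directly by showing $\Q(\sqrt{\Delta})$ is the unique quadratic subfield of $\Q(\beta)$. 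Your route is arguably cleaner, since it avoids the paper's slightly awkward notation ``$[F_p:\Q(\sqrt{\Delta})]=p-1$'' in the case $\sqrt{\Delta}\notin F_p$ (where $\Q(\sqrt{\Delta})\not\subseteq F_p$, so what is really meant is $[F_p(\sqrt{\Delta}):\Q(\sqrt{\Delta})]$).
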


\begin{proof}
    $F_p(\beta) = B_0(\zeta_p)$ certainly contains all the roots of $h$. Conversely, the roots of $h$ include $\beta$ and $\zeta_p\beta$, so the splitting field is at least $\Q(\zeta_p, \beta) = F_p(\beta)$. Thus, $K = F_p(\beta)$. 

    $[B_i : \Q] = 2p$ and $[F_p : \Q] = p - 1$, and $\gcd(2p, p - 1) = 2$ because $p$ is odd. Thus, $p(p - 1) \mid [K : \Q] \mid 2p(p - 1)$. If $\sqrt{\Delta} \in F_p$, then $[F_p : \Q(\sqrt{\Delta})] = \frac{p - 1}{2}$ is coprime to $[B : \Q(\sqrt{\Delta})] = p$. Thus $[K : \Q] = 2[K : \Q(\sqrt{\Delta})] = 2 \cdot \frac{p - 1}{2} \cdot p = p(p - 1)$. If $\sqrt{\Delta} \notin F_p$, then $[F_p : \Q(\sqrt{\Delta})] = p - 1$ is again coprime to $[B : \Q(\sqrt{\Delta})] = p$, and similarly $[K : \Q] = 2[K : \Q(\sqrt{\Delta})] = 2 \cdot (p - 1) \cdot p = 2p(p - 1)$.
\end{proof}

\begin{remark} \label{characteristic}
    As $\mathbb{Q}(\sqrt{\Delta})/\Q$ and $F_p/\Q$ are Galois extensions, an automorphism $\sigma \in \mathrm{Gal}(K / \mathbb{Q})$ acts as an automorphism of $\mathbb{Q}(\sqrt{\Delta}), F_p$. 
\end{remark}

Using the lemmas of this section, we are now able to specify the actions of  $\sigma\in \mathrm{Gal}(K/\Q)$ in terms of its action on $D_0, \sqrt{\Delta}$, and $\zeta_p$.  We define $\eps_\sigma=
\sigma(\sqrt{\Delta})/\sqrt{\Delta}=\pm 1$. 

\begin{theorem} \label{galois-actions}
    If $\sqrt{\Delta} \notin F_p$, then there exists a bijection between $\mathrm{Gal}(K / \mathbb{Q})$ 
    and $\mathcal{D} \times \{\pm 1\} \times \{\zeta_p^i\}_{i \in \mathbb{Z}_p^\times}$ given by  
    $\sigma \mapsto (\sigma(D_0), \eps_{\sigma}, \sigma(\zeta_p))$. 
    If $\sqrt{\Delta} \in F_p$,
    then there exists a bijection between $\mathrm{Gal}(K / \mathbb{Q})$ and $\mathcal{D} \times \{\zeta_p^i\}_{i \in \mathbb{Z}_p^\times}$ given by $\sigma \mapsto(\sigma(D_0), \sigma(\zeta_p))$.
\end{theorem}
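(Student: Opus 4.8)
The plan is to prove both bijections by the same device: show that source and target are finite sets of equal cardinality, and then that the displayed map is injective, so that surjectivity is automatic. For the count, Lemma~\ref{splitting-degree} gives $|\mathrm{Gal}(K/\Q)| = [K:\Q] = 2p(p-1)$ when $\sqrt{\Delta}\notin F_p$ and $p(p-1)$ when $\sqrt{\Delta}\in F_p$, while $|\mathcal D| = p$ by Lemma~\ref{D_i-properties} and $|\{\zeta_p^i\}_{i\in\Z_p^\times}| = p-1$; hence $|\mathcal D\times\{\pm1\}\times\{\zeta_p^i\}_{i\in\Z_p^\times}| = 2p(p-1)$ and $|\mathcal D\times\{\zeta_p^i\}_{i\in\Z_p^\times}| = p(p-1)$, matching. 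I would first note that the maps are well defined: $\sigma(D_0)\in\mathcal D$ by Lemma~\ref{transitivity}; $\eps_\sigma\in\{\pm1\}$ because $\Delta$ is not a rational square (Theorem~\ref{reducibility-criterion}(1)), so $\sqrt{\Delta}$ satisfies $x^2-\Delta$ over $\Q$ and $\sigma$ acts on $\Q(\sqrt\Delta)$ (Remark~\ref{characteristic}); and $\sigma(\zeta_p)$ is a primitive $p$th root of unity since $\sigma$ restricts to an element of $\mathrm{Gal}(F_p/\Q)$ (Remark~\ref{characteristic}).

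For injectivity when $\sqrt{\Delta}\notin F_p$, suppose $\sigma,\tau\in\mathrm{Gal}(K/\Q)$ have the same image and set $D_j := \sigma(D_0) = \tau(D_0)$. By Lemma~\ref{transitivity} the actions on $\mathcal D$ and $\mathcal B$ are equivalent under the correspondence $D_i\subset B_i$, so $\sigma(B_0) = \tau(B_0) = B_j$; since $\sigma(\beta)$ and $\tau(\beta)$ are roots of $h$ lying in $B_j$, Lemma~\ref{B_i-distinct} forces both into the two-element set $\{\zeta_p^j\beta,\ \zeta_p^{-j}\beta_1\}$. To decide which, raise to the $p$th power: $(\zeta_p^j\beta)^p = \alpha$ and $(\zeta_p^{-j}\beta_1)^p = \alpha_1$, with $\alpha\neq\alpha_1$ (else $f$ would be reducible); and since $\alpha,\alpha_1$ are the two conjugate roots of $f$ in $\Q(\sqrt{\Delta})$, $\sigma$ fixes $\alpha$ when $\eps_\sigma = 1$ and sends $\alpha\mapsto\alpha_1$ when $\eps_\sigma = -1$. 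Thus $\eps_\sigma$ pins down $\sigma(\beta)$ among the two candidates, and $\eps_\tau$ pins down $\tau(\beta)$; as $\eps_\sigma = \eps_\tau$, we get $\sigma(\beta) = \tau(\beta)$. Together with $\sigma(\zeta_p) = \tau(\zeta_p)$ and $K = F_p(\beta)$ (Lemma~\ref{splitting-degree}), this yields $\sigma = \tau$, so the map is injective, hence bijective.

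For the case $\sqrt{\Delta}\in F_p$, the point is that $\sqrt{\Delta}\in F_p = \Q(\zeta_p)$, so the restriction $\sigma|_{F_p}$ --- equivalently $\sigma(\zeta_p)$ --- already determines $\sigma(\sqrt{\Delta})$ and therefore $\eps_\sigma$; the pair $(\sigma(D_0),\sigma(\zeta_p))$ thus carries the same information as the triple $(\sigma(D_0),\eps_\sigma,\sigma(\zeta_p))$, and the argument of the previous paragraph again forces $\sigma = \tau$, giving injectivity and, with the count, the bijection. I expect the one step with real content --- as opposed to bookkeeping --- to be the identification of $\sigma(\beta)$ from $\sigma(D_0)$ and $\eps_\sigma$: the combinatorics of Lemmas~\ref{transitivity} and~\ref{B_i-distinct} confine $\sigma(\beta)$ to $\{\zeta_p^j\beta,\zeta_p^{-j}\beta_1\}$, and it is the arithmetic of $\Q(\sqrt{\Delta})$, via the $p$th power map and the effect of $\eps_\sigma$ on $\{\alpha,\alpha_1\}$, that selects the correct root. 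Everything else rests on the degree computation of Lemma~\ref{splitting-degree} and the count $|\mathcal D| = p$.
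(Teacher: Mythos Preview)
Your proposal is correct and follows essentially the same approach as the paper's proof: both arguments combine the degree count from Lemma~\ref{splitting-degree} with an injectivity check, the latter resting on the observation that $(\sigma(D_0),\eps_\sigma)$ determines $\sigma(\beta)$ (the paper phrases this as ``$\sigma(\beta)$ determines $(\sigma(B_0),\eps_\sigma)$, and vice-versa''), and that $\sigma(\beta)$ together with $\sigma(\zeta_p)$ determines $\sigma$ since $K=F_p(\beta)$. Your write-up is somewhat more explicit about well-definedness and about the mechanism by which $\eps_\sigma$ selects between $\zeta_p^j\beta$ and $\zeta_p^{-j}\beta_1$ via the $p$th-power map, but the underlying structure of the two proofs is the same.
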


\begin{proof}
    $\sigma \in \mathrm{Gal}(K / \mathbb{Q})$ permutes the roots of $h$. Because all roots of $h$ are expressible in terms of $\beta$ and $\zeta_p$ (as $\beta_1 = c/\beta$), $\sigma$ is determined entirely by $\sigma(\beta)$ and $\sigma(\zeta_p)$.
    
    If $\sqrt{\Delta} \notin F_p$, there are $2p$ choices for $\sigma(\beta)$ and $p - 1$ choices for $\sigma(\zeta_p)$, so $|\mathrm{Gal}(K / \mathbb{Q})| \leq 2p(p - 1)$. But $|\mathrm{Gal}(K / \mathbb{Q})| = 2p(p - 1)$, so all choices must correspond to distinct elements of the Galois group. The action $\sigma(\beta)$ determines $(\sigma(B_0), \eps_\sigma)$, and vice-versa, and the actions on $B_0, D_0$ are also equivalent. Therefore, $\sigma \in \mathrm{Gal}(K / \mathbb{Q})$ corresponds exactly to a choice of 
    $(D_i, \pm 1, \zeta^j_p)$. 
    \qed

    If $\sqrt{\Delta} \in F_p$, $\sigma(\zeta_p)$ determines $\sigma(\sqrt{\Delta})$. Therefore, $(\sigma(\zeta_p), \sigma(D_0))$ determines $\sigma(\beta)$ and thus $\sigma$. There are $p$ choices for $\sigma(D_0)$ and $p - 1$ choices for $\sigma(\zeta_p)$, so $|\mathrm{Gal}(K / \mathbb{Q})| \leq p(p - 1)$. But $|\mathrm{Gal}(K / \mathbb{Q})| = p(p - 1)$, so all choices must correspond to distinct elements of the Galois group. Therefore, $\sigma \in \mathrm{Gal}(K / \mathbb{Q})$ corresponds exactly to a choice of $(D_i, \zeta_p^j)$.
\end{proof}

\section{The Galois groups of $h$ and $d_h$}

As in Section~\ref{sect-h}, we assume that
\begin{equation*}
    h(x)= f(x^p) = x^{2p} + bx^p + c^p,
\end{equation*}
that  $h(x)$ is irreducible and
let $d_h$ denote
the polynomial $d_h= D_p(x, c) + b$.  The assumption on $h$ shows that $d_h$ is irreducible. 

\begin{theorem} \label{d-split}
Let $L$ be the splitting field of $d_h$ over $K$.  Then $K=L(\sqrt{\Delta})$ and
for all $i$, 
    \begin{equation*}
    D_i(\zeta_p + \zeta_p^{-1}, \sqrt{\Delta}(\zeta_p - \zeta_p^{-1}))\subset L 
    \end{equation*}
\end{theorem}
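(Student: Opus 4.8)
The plan is to first exhibit both $\zeta_p+\zeta_p^{-1}$ and $\sqrt{\Delta}\,(\zeta_p-\zeta_p^{-1})$ inside $L$, then deduce the displayed inclusion, and finally read off $K=L(\sqrt{\Delta})$. Throughout I would write $r_i:=\zeta_p^i\beta+\zeta_p^{-i}\beta_1$ for the roots of $d_h$, so that $r_i\in L$ and $D_i=\Q(r_i)\subset L$ for every $i$; and I would fix the branch $\sqrt{\Delta}:=\beta^p-\beta_1^p=\alpha-\alpha_1$, which squares to $(\alpha+\alpha_1)^2-4\alpha\alpha_1=b^2-4c^p=\Delta$ and lies in $K$. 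Since each $r_i$ lies in $B_i\subset K$ one already has $L\subset K$, hence $L(\sqrt{\Delta})\subset K$; the remaining content of the theorem is the reverse inclusion, which will come out of the same computation as the displayed statement.

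First I would extract $\zeta_p+\zeta_p^{-1}$ from the identity $r_1+r_{-1}=(\zeta_p+\zeta_p^{-1})(\beta+\beta_1)=(\zeta_p+\zeta_p^{-1})\,r_0$. Here $r_0=\beta+\beta_1\neq 0$, for otherwise $\beta^2=-\beta\beta_1=-c\in\Q$ and $[\Q(\beta):\Q]\le 2$, contradicting $[B_0:\Q]=2p$ with $p>3$; hence $\zeta_p+\zeta_p^{-1}=(r_1+r_{-1})/r_0\in L$. For the other element I would use $r_1-r_{-1}=(\zeta_p-\zeta_p^{-1})(\beta-\beta_1)$ together with the factorization $\beta^p-\beta_1^p=(\beta-\beta_1)\sum_{i=0}^{p-1}\beta^{p-1-i}\beta_1^i=(\beta-\beta_1)\,a_p$, where $a_p$ is the $m=p$ instance of the coefficient $a_m$ from Theorem~\ref{recursion} with $t=\beta+\beta_1=r_0$ and $n=\beta\beta_1=c$. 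By the recursion $a_1=1$, $a_2=t$, $a_{m+2}=ta_{m+1}-na_m$, so $a_p$ is an integer polynomial in $r_0$ and $c$, i.e. $a_p\in D_0\subset L$; and $a_p\neq 0$, since $\alpha\neq\alpha_1$ (as $f$ is separable) and $\beta\neq\beta_1$ (else $\beta^2=c$, again contradicting $[B_0:\Q]=2p$) force $\beta^p\neq\beta_1^p$ with $\beta-\beta_1\neq 0$. Hence $\beta-\beta_1=\sqrt{\Delta}/a_p$ and $\sqrt{\Delta}\,(\zeta_p-\zeta_p^{-1})=a_p\,(r_1-r_{-1})\in L$. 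Since $D_i\subset L$ for all $i$, this gives $D_i\bigl(\zeta_p+\zeta_p^{-1},\ \sqrt{\Delta}(\zeta_p-\zeta_p^{-1})\bigr)\subset L$, which is the displayed inclusion.

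Finally, for $K=L(\sqrt{\Delta})$ only $K\subset L(\sqrt{\Delta})$ remains. Working in $L(\sqrt{\Delta})$, and using $\Delta\neq 0$ so that $\sqrt{\Delta}$ is invertible, I would write $\zeta_p-\zeta_p^{-1}=\bigl(\sqrt{\Delta}(\zeta_p-\zeta_p^{-1})\bigr)/\sqrt{\Delta}$, whence $\zeta_p=\frac{1}{2}\bigl((\zeta_p+\zeta_p^{-1})+(\zeta_p-\zeta_p^{-1})\bigr)\in L(\sqrt{\Delta})$, and likewise $\beta=\frac{1}{2}\bigl(r_0+\sqrt{\Delta}/a_p\bigr)\in L(\sqrt{\Delta})$. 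By Lemma~\ref{splitting-degree}, $K=F_p(\beta)=\Q(\zeta_p,\beta)\subset L(\sqrt{\Delta})$, so $K=L(\sqrt{\Delta})$. I expect the main obstacle to be not the algebra but the two nonvanishing facts, $r_0\neq 0$ and $a_p\neq 0$ (both forced by $[B_0:\Q]=2p>4$ together with separability of $f$), and, conceptually, the observation that $\beta-\beta_1$ and $\sqrt{\Delta}$ differ only by the factor $a_p\in L$ — this is exactly why it is the twisted element $\sqrt{\Delta}(\zeta_p-\zeta_p^{-1})$, rather than $\zeta_p-\zeta_p^{-1}$ itself, that belongs to $L$.
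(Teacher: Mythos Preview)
Your proof is correct, and for the inclusion $\zeta_p+\zeta_p^{-1}\in L$ it coincides with the paper's (you are simply more careful in checking $r_0\neq 0$, which the paper divides by without comment). The genuine difference is in how you place $\sqrt{\Delta}(\zeta_p-\zeta_p^{-1})$ inside $L$. The paper argues by Galois theory: any $\sigma\in\mathrm{Gal}(K/\Q)$ fixing $L$ must fix every $D_i$, hence every $B_i$; one then checks that the induced action forces either $(\sigma(\sqrt{\Delta}),\sigma(\zeta_p))=(\sqrt{\Delta},\zeta_p)$ or $(-\sqrt{\Delta},\zeta_p^{-1})$, and in both cases $\sqrt{\Delta}(\zeta_p-\zeta_p^{-1})$ is fixed, so it lies in $L$ by the Galois correspondence. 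You instead produce an explicit identity: from $r_1-r_{-1}=(\zeta_p-\zeta_p^{-1})(\beta-\beta_1)$ and $\sqrt{\Delta}=\beta^p-\beta_1^p=(\beta-\beta_1)a_p$ with $a_p\in D_0\subset L$ (via the recursion of Theorem~\ref{recursion}), you get the closed formula $\sqrt{\Delta}(\zeta_p-\zeta_p^{-1})=a_p(r_1-r_{-1})$. Your route is more elementary---it uses no Galois correspondence at this step---and yields an explicit expression for the twisted element in terms of the roots of $d_h$, at the cost of tracking the auxiliary nonvanishing of $a_p$. The paper's argument is shorter and more structural, but yours has the virtue of explaining \emph{why} the element lands in $L$: $\beta-\beta_1$ and $\sqrt{\Delta}$ differ by a factor in $D_0$. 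For $K=L(\sqrt{\Delta})$ the paper simply notes $B_i=D_i(\sqrt{\Delta})\subset L(\sqrt{\Delta})$ for all $i$, which already captures every root of $h$; your recovery of $\zeta_p$ and $\beta$ from $\zeta_p\pm\zeta_p^{-1}$ and $r_0,\sqrt{\Delta}/a_p$ is equally valid and consistent with your explicit style.
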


\begin{proof}
    $L$ contains all of the roots of $d_h$, 
    which splits in $K$, so $D_i \subset L \subset K$ for each $i$. 
    Since $\zeta_p^i \beta +
    \zeta_p^{-i} \beta_1\in D_i$, each term is in $L$.
    Then $L$ contains $$\frac{(\zeta_p\beta + \zeta_p^{-1}\beta_1) + (\zeta_p^{-1}\beta + \zeta_p\beta_1)}{(\zeta_p^0\beta + \zeta_p^{-0}\beta_1)} = \frac{(\beta + \beta_1)(\zeta_p + \zeta_p^{-1})}{\beta + \beta_1} = \zeta_p + \zeta_p^{-1}$$ as well.
    
    Now choose an arbitrary $\sigma \in \mathrm{Gal}(K / \mathbb{Q})$ which fixes $L$. $\sigma$ acts trivially on $\mathcal{D}$ and $\mathcal{B}$ and acts on $\sqrt{\Delta}$ either trivially or by conjugation. If $\sigma(\sqrt{\Delta}) = \sqrt{\Delta}$, then $\sigma(\zeta_p^i\beta) = \zeta_p^i\beta$, so $\sigma(\zeta_p) = \zeta_p$. Thus:
    
    \begin{equation*}
    \begin{aligned}
        \sigma(\sqrt{\Delta}(\zeta_p - \zeta_p^{-1})) &= \sigma(\sqrt{\Delta})(\sigma(\zeta_p) - (\sigma(\zeta_p))^{-1})\\
        &= \sqrt{\Delta}(\zeta_p - \zeta_p^{-1})
        \end{aligned}
    \end{equation*}
    
    Similarly, if $\sigma(\sqrt{\Delta}) = -\sqrt{\Delta}$, then $\sigma(\zeta_p^i\beta) = \zeta_p^{-i}\beta_1$, so $\sigma(\zeta_p) = \zeta_p^{-1}$. Thus,

    \begin{equation*}
    \begin{aligned}
        \sigma(\sqrt{\Delta}(\zeta_p - \zeta_p^{-1})) &= \sigma(\sqrt{\Delta})(\sigma(\zeta_p) - (\sigma(\zeta_p))^{-1})\\
        &= -\sqrt{\Delta}(\zeta_p^{-1} - \zeta_p)\\ 
        &= \sqrt{\Delta}(\zeta_p - \zeta_p^{-1})
        \end{aligned}
    \end{equation*}

    Because $\sqrt{\Delta}(\zeta_p - \zeta_p^{-1})$ is fixed by the subgroup fixing $L$, $\sqrt{\Delta}(\zeta_p - \zeta_p^{-1}) \in L$. Finally, $L(\sqrt{\Delta}) \subset K$ contains each $B_i = D_i(\sqrt{\Delta})$, so $K=L(\sqrt{\Delta})$.
\end{proof}

From this point, we will need to construct automorphisms using Theorem \ref{galois-actions}, rather than considering arbitrary automorphisms as we have done in the previous section. We can prove:

\begin{theorem} \label{dickson-split}
    The splitting field $L$ of $d_h$ is $D_0(\zeta_p + \zeta_p^{-1}, \sqrt{\Delta}(\zeta_p - \zeta_p^{-1}))$.
\end{theorem}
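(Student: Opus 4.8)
The target is the identity of fields $L = M$, where $M := D_0\bigl(\zeta_p + \zeta_p^{-1},\ \sqrt{\Delta}\,(\zeta_p - \zeta_p^{-1})\bigr)$. The inclusion $M \subseteq L$ is already in hand: it is Theorem~\ref{d-split} with $i = 0$. So the entire content is the reverse inclusion $L \subseteq M$. Since $L$ is generated over $\mathbb{Q}$ by the roots $r_i := \zeta_p^i\beta + \zeta_p^{-i}\beta_1$ ($i \in \mathbb{Z}_p$) of the irreducible polynomial $d_h$ (Lemma~\ref{D_i-properties}), I would prove it by showing $r_i \in M$ for every $i \in \mathbb{Z}_p$.

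The plan is to split $r_i$ across the involution $i \mapsto -i$, writing $2r_i = (r_i + r_{-i}) + (r_i - r_{-i})$, and to place each summand in $M$. For the symmetric part, $r_i + r_{-i} = (\zeta_p^i + \zeta_p^{-i})\,t$ with $t = \beta + \beta_1 = r_0$; here $\zeta_p^i + \zeta_p^{-i} = D_i(\zeta_p + \zeta_p^{-1}, 1)$ lies in $\mathbb{Q}(\zeta_p + \zeta_p^{-1}) \subseteq M$ by Waring's identity (Remark~\ref{Waring} with $n=1$), and $t \in D_0 \subseteq M$, so $r_i + r_{-i} \in M$; the case $i = 0$ records $r_0 = t \in M$. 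For the antisymmetric part, $r_i - r_{-i} = (\zeta_p^i - \zeta_p^{-i})(\beta - \beta_1)$, I would factor it as a product of three pieces, each shown separately to lie in $M$: first the ratio $(\zeta_p^i - \zeta_p^{-i})/(\zeta_p - \zeta_p^{-1})$, which equals $\sum_{k=1}^{i}\zeta_p^{\,i+1-2k}$ and is fixed by $\zeta_p \mapsto \zeta_p^{-1}$, hence lies in the fixed field $\mathbb{Q}(\zeta_p + \zeta_p^{-1}) \subseteq M$; second the ratio $(\beta - \beta_1)/(\beta^p - \beta_1^p)$, which is the reciprocal of the symmetric expression $\sum_{j=0}^{p-1}\beta^{j}\beta_1^{p-1-j} = c^{(p-1)/2} + \sum_{j=0}^{(p-3)/2} c^{j} D_{p-1-2j}(t, c)$ (using $\beta\beta_1 = c \in \mathbb{Q}$ and Remark~\ref{Waring}), an element of $\mathbb{Q}(t) = D_0 \subseteq M$ that is nonzero since $\alpha \neq \alpha_1$; and third $(\beta^p - \beta_1^p)(\zeta_p - \zeta_p^{-1}) = \pm\sqrt{\Delta}\,(\zeta_p - \zeta_p^{-1}) \in M$, using $\beta^p - \beta_1^p = \alpha - \alpha_1 = \pm\sqrt{\Delta}$. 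Multiplying the three, $r_i - r_{-i} \in M$; hence $r_i, r_{-i} \in M$, and as $i$ was arbitrary, $L \subseteq M$, so $L = M$.

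The only genuine obstacle is the antisymmetric piece $r_i - r_{-i}$: the element it visibly produces, $(\zeta_p - \zeta_p^{-1})(\beta - \beta_1)$, is not manifestly in $M$, and one cannot divide out $\sqrt{\Delta}$ and $\zeta_p - \zeta_p^{-1}$ individually, because in the generic situation $\sqrt{\Delta}$ lies in $K$ but not in $M = L$. The resolution is to keep $\sqrt{\Delta}(\zeta_p - \zeta_p^{-1})$ intact as the given generator of $M$ and to absorb the leftover factor $(\beta - \beta_1)/(\beta^p - \beta_1^p)$ into $D_0$ by recognizing it as a symmetric rational function of $\beta$ and $\beta_1$; the Dickson--Waring identities make both that factor and the quantities $\zeta_p^i \pm \zeta_p^{-i}$ explicit. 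Everything else is routine manipulation with roots of unity, so once this three-factor decomposition is set up the theorem follows.
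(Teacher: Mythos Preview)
Your argument is correct and takes a genuinely different route from the paper. The paper proves $L = M$ by a Galois-theoretic degree count: it first observes $[K:M] \leq 2$ because $K \subseteq M(\sqrt{\Delta})$, and then splits into three cases according to whether $\sqrt{\Delta}\in F_p$ and the residue of $p$ mod $4$. In the two cases where $[K:L]=2$, it invokes Theorem~\ref{galois-actions} to manufacture a nontrivial automorphism of $K$ fixing every root of $d_h$, forcing $[K:L]\geq 2$ and hence $L=M$; in the remaining case it checks directly that $M=K$. By contrast, you bypass the case analysis and Theorem~\ref{galois-actions} entirely, placing each root $r_i$ into $M$ by the explicit symmetric/antisymmetric decomposition and the Waring--Dickson identities. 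Your three-factor handling of $r_i-r_{-i}$ is exactly the right move: it keeps the generator $\sqrt{\Delta}(\zeta_p-\zeta_p^{-1})$ intact while pushing the residual ratios $(\zeta_p^i-\zeta_p^{-i})/(\zeta_p-\zeta_p^{-1})$ and $(\beta-\beta_1)/(\beta^p-\beta_1^p)$ into $\mathbb{Q}(\zeta_p+\zeta_p^{-1})$ and $D_0$ respectively. This is more elementary and uniform; the paper's approach, on the other hand, simultaneously records the value of $[K:L]$ in each case, which is exactly the information fed into the proof of Theorem~\ref{galois-groups}, so its extra case work is not wasted. One small remark: you write $D_i(\zeta_p+\zeta_p^{-1},1)$ for the Dickson polynomial, which collides with the paper's notation $D_i$ for the fields $\mathbb{Q}(r_i)$; you may want to rename the index to avoid confusion.
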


To help to understand the proof of Theorem~\ref{dickson-split}, Figures~\ref{fig:1},\,~\ref{fig:2},~\ref{fig:3} provide diagrams of the relevant field inclusions in each case. All of the boxed fields are Galois over $\Q$, and the fields marked $``p \times"$ are conjugates.

\begin{proof}
    $D_0(\zeta_p + \zeta_p^{-1}, \sqrt{\Delta}(\zeta_p - \zeta_p^{-1})) \subset L$ from Theorem \ref{d-split}. Since $$K = B_0(\zeta_p) = D_0(\zeta_p, \sqrt{\Delta}) \subset D_0(\zeta_p + \zeta_p^{-1}, \sqrt{\Delta}(\zeta_p - \zeta_p^{-1}), \sqrt{\Delta})$$ and $\sqrt{\Delta}$ is of degree at most 2, we see that that $$[K : L] \leq [K : D_0(\zeta_p + \zeta_p^{-1}, \sqrt{\Delta}(\zeta_p - \zeta_p^{-1}))] \leq 2$$
    \noindent
    Recall that $F_p=\Q(\zeta_p)$ is the cyclotomic field. There are then 3 cases: 
    \begin{enumerate}
    \item\label{case-1} $\sqrt{\Delta} \notin F_p$; or 
    \item\label{case-2} $\sqrt{\Delta} \in F_p$ and  $p \equiv 1 \bmod 4$; or
    \item\label{case-3} $\sqrt{\Delta} \in F_p$ and $p \equiv 3 \bmod 4$. 
    \end{enumerate}    
    \emph{Case \eqref{case-1}}: Consider the case of $\sqrt{\Delta} \notin F_p$.
    From Theorem \ref{galois-actions}, there exists an element of $\sigma\in \mathrm{Gal}(K / \mathbb{Q})$ defined by $(\sigma(D_0), \epsilon_\sigma, \sigma(\zeta_p)) = (D_0, -1, \zeta_p^{-1})$. Then $$\sigma(\zeta_p^i\beta + \zeta_p^{-i}\beta_1) = \sigma(\zeta_p)^i\sigma(\beta) + \sigma(\zeta_p)^{-i}\sigma(\beta_1) = \zeta_p^{-i}\beta_1 + \zeta_p^i\beta$$ for all $i$, so $\sigma$ fixes all roots of $d_h$ and therefore $L$. The trivial element also fixes $L$, so by the Galois correspondence, $[K : L] \geq 2$. So $[K : L] = 2$, and $L = D_0(\zeta_p + \zeta_p^{-1}, \sqrt{\Delta}(\zeta_p - \zeta_p^{-1}))$ after all, and is of degree $\frac{1}{2}\cdot 2p(p - 1) = p(p - 1)$, using Lemma \ref{splitting-degree}.

    \emph{Case \eqref{case-2}}: Now suppose that $\sqrt{\Delta} \in F_p$ and $p \equiv 1 \bmod 4$. Then $[\Q(\zeta_p + \zeta_p^{-1}) : \Q] = \frac{p - 1}{2}$ is even, so $\Q(\sqrt{\Delta}) \subset \Q(\zeta_p + \zeta_p^{-1})$ by the Galois correspondence. Thus
    $$D_0(\zeta_p + \zeta_p^{-1}, \sqrt{\Delta}(\zeta_p - \zeta_p^{-1})) = D_0(\zeta_p + \zeta_p^{-1}, (\zeta_p - \zeta_p^{-1}), \sqrt{\Delta}) \supset B_0(\zeta_p) = K \supset L$$ and in fact $K = L = D_0(\zeta_p + \zeta_p^{-1}, \sqrt{\Delta}(\zeta_p - \zeta_p^{-1}))$, which by Lemma \ref{splitting-degree} is of degree $p(p - 1)$.
    
    \emph{Case \eqref{case-3}}: Finally, we suppose that $\sqrt{\Delta} \in F_p$, but $p \equiv 3 \bmod 4$. Then $[\Q(\zeta_p + \zeta_p^{-1}) : \Q] = \frac{p - 1}{2}$ is odd, so $\sqrt{\Delta} \notin \Q(\zeta_p + \zeta_p^{-1})$ by the Galois correspondence (else $\Q(\sqrt{\Delta}) \subset \Q(\zeta_p + \zeta_p^{-1})$). From Theorem \ref{galois-actions}, there exists an element $\sigma\in \mathrm{Gal}(K / \mathbb{Q})$ defined by $(\sigma(D_0), \sigma(\zeta_p)) = (D_0, \zeta_p^{-1})$, and consequently $\sigma(\sqrt{\Delta}) = -\sqrt{\Delta}$ because $\sqrt{\Delta} \notin \mathbb{Q}(\zeta_p + \zeta_p^{-1})$, the fixed field of conjugation in $F_p$. Then as in case (1), $\sigma$ fixes all roots of $d_h$ and therefore $L$, so $[K : L] \geq 2$. Then $[K : L] = 2$, and $L = D_0(\zeta_p + \zeta_p^{-1}, \sqrt{\Delta}(\zeta_p - \zeta_p^{-1})) = D_0(\zeta_p + \zeta_p^{-1})$, after all, and is of degree $\frac{p(p - 1)}{2}$, using Lemma \ref{splitting-degree}.
\end{proof}

\begin{remark}
    The choice of the automorphism $\sigma$ fixing $L$ follows Jones' construction in \cite[Section 3, p. 6]{reciprocal}.
\end{remark}

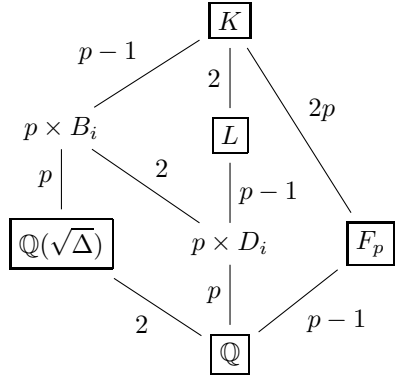
\begin{figure}
\begin{tikzpicture}[node distance=8mm and 8mm]
    \node (Q) {$\boxed{\mathbb{Q}}$} ;
    \node (D) [above = of Q] {$p \times D_i$} ;
    \node (delta) [left = of D] {$\boxed{\mathbb{Q}(\sqrt{\Delta})}$};
    
    \node (cyclotomic) [right= of D] {$\boxed{F_p}$} ;

    \node (L) [above = of D] {$\boxed{L}$} ;
    \node (B) [above = of delta] {$p \times B_i$} ;
    
    \node (K) [above = of L] {$\boxed{K}$} ;

    \draw (Q) edge node[left] {$p$} (D) ;
    \draw (Q) edge node[below left] {$2$} (delta) ;
    \draw (D) edge node[above right] {$2$} (B) ;
    \draw (delta) edge node[left] {$p$} (B) ;

    \draw (Q) edge node[below right] {$p - 1$} (cyclotomic) ;

    \draw (D) edge node[right] {$p - 1$} (L) ;

    \draw (B) edge node[above left] {$p - 1$} (K) ;

    \draw (L) edge node[left] {$2$} (K) ;
    \draw (cyclotomic) edge node[above right] {$2p$} (K) ;

\end{tikzpicture}
\caption{The field diagram when $\sqrt{\Delta} \notin F_p$}
\label{fig:1}
\end{figure}
\begin{figure}
\begin{tikzpicture}[node distance=4.5mm and 4.5mm]
    \node (Q) {$\boxed{\mathbb{Q}}$} ;
    \node (D) [above left = of Q] {$p \times D_i$} ;
    \node (delta) [above right = of Q] {$\boxed{\mathbb{Q}(\sqrt{\Delta})}$};
    \node (B) [above = of D] {$p \times B_i$} ;
    \node (half) [above = of delta] {$\boxed{\mathbb{Q}(\zeta_p + \zeta_p^{-1})}$} ;
    \node (double) [above = of B] {$p \times D_i(\zeta_p + \zeta_p^{-1})$} ;
    \node (cyclotomic) [above = of half] {$\boxed{F_p}$} ;
    \node (K) [above left = of cyclotomic] {$\boxed{K}$} ;

    \draw (Q) edge node[below left] {$p$} (D) ;
    \draw (Q) edge node[below right] {$2$} (delta) ;
    \draw (D) edge node[left] {$2$} (B) ;
    \draw (delta) edge node[above] {$p$} (B) ;
    \draw (delta) edge node[right] {$\frac{p - 1}{4}$} (half) ;
    \draw (B) edge node[left] {$\frac{p - 1}{4}$} (double) ;
    \draw (half) edge node[above] {$p$} (double) ;
    \draw (half) edge node[right] {$2$} (cyclotomic) ;
    \draw (double) edge node[above left] {$2$} (K) ;
    \draw (cyclotomic) edge node[above right] {$p$} (K) ;
\end{tikzpicture}
\caption{The field diagram when $\sqrt{\Delta} \in F_p$ and $p \equiv 1 \bmod 4$}
\label{fig:2}
\end{figure}

\begin{figure}
\begin{tikzpicture}[node distance=9mm and 9mm]
    \node (Q) {$\boxed{\mathbb{Q}}$} ;
    \node (delta) [above = of Q] {$\boxed{\mathbb{Q}(\sqrt{\Delta})}$};
    \node (D) [left = of delta] {$p \times D_i$} ;
    \node (B) [above = of D] {$p \times B_i$} ;
    \node (half) [right = of delta] {$\boxed{\mathbb{Q}(\zeta_p + \zeta_p^{-1})}$} ;

    \node (bicycle) [above = of half] {$\boxed{F_p}$} ;
    \node (candidate) [above = of delta] {$\boxed{L}$} ;
    \node (double) [above = of candidate] {$\boxed{K}$} ;

    \draw (Q) edge node[below left] {$p$} (D) ;
    \draw (Q) edge node[left] {$2$} (delta) ;
    \draw (D) edge node[left] {$2$} (B) ;
    \draw (delta) edge node[left] {$p$} (B) ;
    \draw (Q) edge node[below right] {$\frac{p - 1}{2}$} (half) ;
    \draw (B) edge node[left] {$\frac{p - 1}{2}$} (double) ;
    
    \draw (half) edge node[left] {$2$} (bicycle) ;
    \draw (delta) edge node[above] {$\frac{p-1}{2}$} (bicycle) ;
    \draw (bicycle) edge node[above] {$p$} (double) ;
    \draw (D) edge node[above] {$\frac{p-1}{2}$} (candidate) ;
    \draw (half) edge node[left] {$p$} (candidate) ;
    \draw (candidate) edge node[left] {$2$} (double);
\end{tikzpicture}
\caption{The field diagram when $\sqrt{\Delta} \in F_p$ and $p \equiv 3 \bmod 4$.}
\label{fig:3}
\end{figure}

We are almost ready to prove Theorem \ref{galois-groups}, but first, we must recall the following fact about cyclotomic fields:

\begin{lemma}\cite[VI, Thm 3.3]{lang} \label{quadratic-cyclotomic}
    Let $p$ be an odd prime and let $(\frac{-1}{p}) = (-1)^{p(p - 1)/2}$ be the quadratic Legendre symbol. 
    Let $K=\mathbb{Q} \left( \sqrt{(\frac{-1}{p})p}\, \right)$. Then $K\subset \mathbb{Q}(\zeta_p)$ and $K$ is the only quadratic extension of $\Q$ that is a subfield of $\Q(\zeta_p)$.
\end{lemma}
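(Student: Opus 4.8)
The plan is to split this lemma into an abstract uniqueness statement and a concrete identification. For uniqueness, I would invoke that $\mathbb{Q}(\zeta_p)/\mathbb{Q}$ is Galois with group isomorphic to $(\mathbb{Z}/p\mathbb{Z})^\times$, which is cyclic of order $p-1$; since $p$ is odd this order is even, and a cyclic group has exactly one subgroup of index $2$. By the Galois correspondence, $\mathbb{Q}(\zeta_p)$ therefore contains exactly one quadratic subfield of $\mathbb{Q}$. This already gives the ``only'' part, once we produce one such subfield.

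To pin down which quadratic field it is, I would introduce the quadratic Gauss sum $g = \sum_{a=1}^{p-1}\left(\frac{a}{p}\right)\zeta_p^{\,a} \in \mathbb{Q}(\zeta_p)$ and use the classical evaluation $g^2 = (-1)^{p(p-1)/2}p = \left(\frac{-1}{p}\right)p$. Granting this, $\sqrt{\left(\frac{-1}{p}\right)p} = \pm g \in \mathbb{Q}(\zeta_p)$, so $K = \mathbb{Q}\!\left(\sqrt{\left(\frac{-1}{p}\right)p}\right) \subset \mathbb{Q}(\zeta_p)$; and since $\left(\frac{-1}{p}\right)p$ is a squarefree integer different from $1$, the field $K$ is genuinely of degree $2$. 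Combining with the uniqueness step, $K$ is the unique quadratic subfield, as claimed.

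The only non-formal ingredient is the identity $g^2 = \left(\frac{-1}{p}\right)p$, proved by expanding $\sum_{a,b}\left(\frac{ab}{p}\right)\zeta_p^{\,a+b}$, substituting $b = ac$ to decouple the Legendre symbol, and summing the geometric series in $\zeta_p^{\,a(1+c)}$ over $a$; I expect this to be the main obstacle, though it is entirely routine. If one prefers to avoid even that computation, a purely structural alternative is available: only the prime $p$ ramifies in $\mathbb{Q}(\zeta_p)$ (and totally so), hence any quadratic subfield $\mathbb{Q}(\sqrt d)$ with $d$ squarefree is unramified outside $p$, forcing $d = \pm p$; the constraint that an odd prime divides a fundamental discriminant only when that discriminant is $\equiv 1 \bmod 4$ then selects $d = \left(\frac{-1}{p}\right)p$. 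Either route is short, with the Gauss-sum argument being the more self-contained.
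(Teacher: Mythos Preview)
Your proposal is correct. Note, however, that the paper does not actually prove this lemma: it is stated with a citation to Lang \cite[VI, Thm 3.3]{lang} and used as a black box. Your Gauss-sum argument is the standard proof (and essentially the one Lang gives), and your alternative via ramification is also valid; either would serve if a self-contained proof were wanted.
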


\begin{corollary} \label{quadratic-cyclotomic-criterion}
    $\sqrt{\Delta} \in \mathbb{Q}(\zeta_p)$ if and only if $\Delta \in (-1)^{p(p - 1)/2}p\mathbb{Q}^2$.
\end{corollary}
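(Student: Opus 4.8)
The plan is to derive this immediately from Lemma~\ref{quadratic-cyclotomic} together with the elementary criterion for when two quadratic fields coincide. First recall that throughout this section $\Delta = b^2 - 4c^p$ is \emph{not} a rational square (by the irreducibility of $h$ and Theorem~\ref{reducibility-criterion}(1)), so $\mathbb{Q}(\sqrt{\Delta})$ is a genuine quadratic extension of $\mathbb{Q}$; this hypothesis is what makes the statement correct, since for $\Delta$ a square the right-hand condition would fail while $\sqrt{\Delta}\in\mathbb{Q}\subset F_p$.

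For the ``if'' direction, suppose $\Delta \in (-1)^{p(p-1)/2}p\,\mathbb{Q}^2$, say $\Delta = (-1)^{p(p-1)/2}p\,q^2$ with $q\in\mathbb{Q}^\times$. Then $\sqrt{\Delta} = q\sqrt{(-1)^{p(p-1)/2}p}$ up to sign, and Lemma~\ref{quadratic-cyclotomic} gives $\mathbb{Q}\bigl(\sqrt{(-1)^{p(p-1)/2}p}\bigr)\subset\mathbb{Q}(\zeta_p)$, so $\sqrt{\Delta}\in\mathbb{Q}(\zeta_p)$.

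For the ``only if'' direction, suppose $\sqrt{\Delta}\in\mathbb{Q}(\zeta_p)$. Since $\sqrt{\Delta}\notin\mathbb{Q}$, the field $\mathbb{Q}(\sqrt{\Delta})$ is a quadratic subextension of $\mathbb{Q}(\zeta_p)/\mathbb{Q}$. By Lemma~\ref{quadratic-cyclotomic} the unique quadratic subfield of $\mathbb{Q}(\zeta_p)$ is $\mathbb{Q}\bigl(\sqrt{(-1)^{p(p-1)/2}p}\bigr)$, hence
\begin{equation*}
\mathbb{Q}(\sqrt{\Delta}) = \mathbb{Q}\bigl(\sqrt{(-1)^{p(p-1)/2}p}\,\bigr).
\end{equation*}
For nonsquare rationals $a,b$ one has $\mathbb{Q}(\sqrt{a}) = \mathbb{Q}(\sqrt{b})$ if and only if $a/b\in\mathbb{Q}^2$, i.e.\ $a\in b\,\mathbb{Q}^2$; applying this with $a=\Delta$ and $b=(-1)^{p(p-1)/2}p$ yields $\Delta\in(-1)^{p(p-1)/2}p\,\mathbb{Q}^2$, as desired.

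I do not anticipate any real obstacle here: the only points requiring care are invoking the non-square hypothesis on $\Delta$ and quoting the standard criterion for equality of quadratic fields (which itself follows by comparing $\sqrt{a}\sqrt{b}=\sqrt{ab}\in\mathbb{Q}(\sqrt{a})$ with $[\mathbb{Q}(\sqrt{a}):\mathbb{Q}]=2$). Everything else is a direct appeal to Lemma~\ref{quadratic-cyclotomic}.
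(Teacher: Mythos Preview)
Your proof is correct and is precisely the argument the paper has in mind: the corollary is stated without proof as an immediate consequence of Lemma~\ref{quadratic-cyclotomic}, and you have simply written out that deduction carefully, including the needed observation that $\Delta$ is a nonsquare (from the standing irreducibility hypothesis on $h$) so that $\mathbb{Q}(\sqrt{\Delta})$ is genuinely quadratic.
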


Finally, we can prove Theorem \ref{galois-groups}.

\begin{proof}[Proof of Theorem \ref{galois-groups}]
    Recall that we are studying irreducible polynomials $h$ of the form $x^{2p} + bx^p + c^p$. We note that for any irreducible $h$ of this form, $\Delta := b^2 - 4c$ must fall into exactly one of the 3 cases given. And from Lemmas \ref{D_i-properties} and \ref{dickson-split}, $d_h$ is also irreducible with splitting field $L = \mathbb{Q}(\beta + \beta_1, \zeta_p + \zeta_p^{-1}, \sqrt{\Delta}(\zeta_p - \zeta_p^{-1}))$. We will use throughout the notation of $D_i := \mathbb{Q}(\zeta_p^i\beta + \zeta_p^{-i}\beta_1)$. Note throughout that while we will be examining specific elements of $\mathrm{Gal}(K / \mathbb{Q})$, we will be considering their actions on the roots of $d$, which is equivalent to the canonical actions of their images in $\mathrm{Gal}(L / \mathbb{Q})$ under the division map.\\
    
    \noindent
    \textit{Proof of Case 1.} If $b^2 - 4c \notin (-1)^{p(p-1)/2}p\mathbb{Q}^2$, then $\sqrt{\Delta} \notin F_p$ by Corollary \ref{quadratic-cyclotomic-criterion}. From above, this means that $L = D_0(\zeta_p + \zeta_p^{-1}, \sqrt{\Delta}(\zeta_p - \zeta_p^{-1}))$ is an extension of degree $p(p - 1)$ which does not contain $\sqrt{\Delta}$.
    
    Consider now the elements $\sigma, \tau \in \mathrm{Gal}(K / \mathbb{Q})$ defined using Theorem \ref{galois-actions} by:
    \begin{equation*}
    \begin{split}
        (\sigma(D_0), \epsilon_\sigma, \sigma(\zeta_p)) & = (D_0, 1, \zeta_p^r) \\
        (\tau(D_0), \epsilon_\tau, \tau(\zeta_p)) & = (D_1, 1, \zeta_p) \\
    \end{split}
    \end{equation*}
        
    Where $\zeta_p^r$ is a generator of the group of $p$th roots of unity. By inspection, $\sigma(D_i) = D_{ri}$ and $\tau(D_i) = D_{i + 1}$. Let $S_p$ be the symmetric group on the set $\Z_p=\{ 0, \dots, p-1\}$. The Galois group of $d_h$ therefore contains the elements
    \begin{equation*}
    \begin{split}
    \tau_1=&\ (1 \ r \ r^2 \ \cdots \ r^{p - 2}) \in S_p, \\
    \tau_2=&\ (0 \ 1 \ 2 \ \cdots \ p - 1) \in S_p
    \end{split}
    \end{equation*}
    The elements $\tau_1$, $\tau_2$  generate $\mathrm{Aff}(\mathbb{F}_p)$, whose size is $p(p - 1)$. As $|\mathrm{Gal}(L / \mathbb{Q})| = p(p - 1)$, $\mathrm{Gal}(L / \mathbb{Q}) \simeq \mathrm{Aff}(\mathbb{F}_p)$. And $\mathrm{Gal}(\mathbb{Q}(\sqrt{\Delta}) / \mathbb{Q}) \simeq C_2$, so the Galois group of $h$ is $$\mathrm{Gal}(K / \mathbb{Q}) \simeq \mathrm{Gal}(L / \mathbb{Q}) \times \mathrm{Gal}(\Q(\sqrt{\Delta} / \Q) \simeq \mathrm{Aff}(\mathbb{F}_p) \times C_2$$ \\

    \noindent
    \textit{Proof of Case 2.} If $p \equiv 1 \bmod 4$ and $b^2 - 4c \in p\mathbb{Q}^2$, then $\sqrt{\Delta} \in F_p$ by Corollary \ref{quadratic-cyclotomic-criterion}. From above, $L = K$ is the splitting field of $d_h$ and $h$, and $|\mathrm{Gal}(K / \mathbb{Q})| = |\mathrm{Gal}(L / \mathbb{Q})| = p(p - 1)$. Now consider elements $\sigma, \tau \in \mathrm{Gal}(K/\Q)$ defined using Theorem \ref{galois-actions} by:
    \begin{equation*}
    \begin{split}
        (\sigma(D_0), \sigma(\zeta_p)) & = (D_0, \zeta_p^r) \\
        (\tau(D_0), \tau(\zeta_p)) & = (D_1, \zeta_p) \\
    \end{split}
    \end{equation*}

    For $\zeta_p^r$ a generator. $\sigma$ conjugates $\sqrt{\Delta} \in F_p \setminus \Q$, so $\sigma(\zeta_p^i\beta) = \zeta_p^{ri} \beta_1$, and $\sigma(D_i) = D_{-ri}$. $\zeta_p^{-r}$ is a generator because 
    \begin{equation*}
        \left(\frac{-r}{p}\right) = 
        \left(\frac{-1}{p}\right)
        \left(\frac{r}{p}\right) = (1)(-1) = -1
    \end{equation*}
    
    So $\sigma$ will be a  $p - 1$-cycle. And again, $\tau(D_i) = D_{i + 1}$. The Galois group of $d_h$ therefore contains the elements

    \begin{center}
\begin{tabular}{c c c c c c c c c c}
$\tau_1$
&$=$
& $(1$ & 
$-r$ & $(-r)^2$&$\ldots$ & 
$(-r)^{(p-2)}$ & $)$ &
\\
$\tau_2$
&$=$&
$(0$ & 
$1$ & $2$&$\ldots$ & 
$p-1$ & $)$
\\

\end{tabular}

\end{center}
    
As in Case 1, the elements $\tau_1$, $\tau_2$ generate $\mathrm{Aff}(\mathbb{F}_p)$, and $|\mathrm{Aff}(\mathbb{F}_p)| = p(p - 1) = |\mathrm{Gal}(K / \mathbb{Q})| = |\mathrm{Gal}(L / \mathbb{Q})|$, so the Galois groups of $d_h, h$ are identically $$\mathrm{Gal}(K / \mathbb{Q}) \simeq \mathrm{Gal}(L / \mathbb{Q}) \simeq \mathrm{Aff}(\mathbb{F}_p)$$ \\

\noindent
\textit{Proof of Case 3.} If $p \equiv 3 \bmod 4$ and $b^2 - 4c \in -p\mathbb{Q}^2$, then $\sqrt{\Delta} \in F_p$ by Corollary \ref{quadratic-cyclotomic-criterion}. From above, $|\mathrm{Gal}(K / \mathbb{Q})| = p(p - 1)$, but $|\mathrm{Gal}(L / \mathbb{Q})| = \frac{p(p - 1)}{2}$. Again we consider elements $\sigma, \tau \in \mathrm{Gal}(K/\Q)$ defined using Theorem \ref{galois-actions} by:
    \begin{equation*}
    \begin{split}
        (\sigma(D_0), \sigma(\zeta_p)) & = (D_0, \zeta_p^r) \\
        (\tau(D_0), \tau(\zeta_p)) & = (D_1, \zeta_p) \\
    \end{split}
    \end{equation*}

    For $\zeta_p^r$ a generator. $\sigma$ again conjugates $\sqrt{\Delta} \in F_p \setminus \Q$, so $\sigma(\zeta_p^i\beta) = \zeta_p^{ri} \beta_1$, and $\sigma(D_i) = D_{-ri}$. However, we see that $\zeta_p^{-r}$ is of order $\frac{p - 1}{2}$ because 
    \begin{equation*}
        \left(\frac{-r}{p}\right)
        = \left(\frac{-1}{p}\right)\left(\frac{r}{p}\right) = (-1)^{p(p - 1)/2}(-1) = 1
    \end{equation*} 
    
    So $\sigma$ will be two $\frac{p - 1}{2}$-cycles, the square of the $p - 1$-cycle generated by $\sqrt{-r}$. And again, $\tau(D_i) = D_{i + 1}$. The Galois group of $d_h$ therefore contains the elements $\tau=\tau_1\tau_2, \tau_3 \in S_p$, where
    $\tau_1, \tau_2$ are the $(p-1)/2$ cycles
\begin{center}
\begin{tabular}{c c c c c c c c}
$\tau_1$
&$=$&
$(1$ & 
$-r$ & $(-r)^2$&$\ldots$ & 
$(-r)^{(p-3)/2}$&
$)$
\\
$\tau_2$
&$=$&
$(-1$ & 
$r$ & $-(-r)^2$&$\ldots$ & 
$-(-r)^{(p-3)/2}$&
$)$
\\
\end{tabular}
\end{center}
and $\tau_3$ is the $p$ cycle
\begin{center}
\begin{tabular}{c c c c c c c c c}
$\tau_3$
&$=$&
$(0$ & 
$1$ & $2$&$\ldots$ & 
$p-1$&
$)$
\\
\end{tabular}
\end{center}
The elements $\tau$, $\tau_3$ generate the normal subgroup $C_p \rtimes C_{(p - 1)/2} \vartriangleleft \mathrm{Aff}(\mathbb{F}_p)$, whose size is $\frac{p(p - 1)}{2} = |\mathrm{Gal}(L / \mathbb{Q})|$. Thus, the Galois group of $d_h$ is $$\mathrm{Gal}(L / \mathbb{Q}) \simeq C_p \rtimes C_{(p - 1)/2} \vartriangleleft \mathrm{Aff}(\mathbb{F}_p)$$
    
Now $\sqrt{\Delta} \notin L$ and $L(\sqrt{\Delta}) = K$ as in Case 1, so $$\mathrm{Gal}(K / \mathbb{Q}) \simeq \mathrm{Gal}(L / \mathbb{Q}) \times C_2 \simeq (C_p \rtimes C_{(p - 1)/2}) \times C_2$$ \qedhere
\end{proof}

\begin{remark}
    The automorphisms $\sigma, \tau$ we choose in the proof of Theorem~\ref{galois-groups} are again as in Jones' \cite[Section 3, p. 6]{reciprocal}.
\end{remark}

\begin{remark}
    In the cases of $p = 2$ and $p = 3$ (the solutions to which we mention in Section 1), our methods of proof fail beginning at Lemma \ref{B_i-distinct}, where we use the fact $p - 1 \mid 2p$. The issue is essentially that $\zeta_2 = -1 \in \mathbb{Q}$ for the case of $p = 2$ and that if $\sqrt{\Delta} \in F_3$, then $\mathbb{Q}(\sqrt{\Delta}) = F_3$ in the case of $p = 3$. These issues both invalidate the distinctness of the $B_i$ and $D_i$, which is key to Lemma \ref{D_i-properties}, on which all of our subsequent work regarding $d_h$ and its Galois group is based.
\end{remark}

\begin{remark}
    From Corollary 1.2 of \cite{reciprocal}, there exist an infinite number of polynomials of the form of case 1 of Theorem \ref{galois-groups}. 
\end{remark}

\nocite{solvable-sextics}
\nocite{survey}

\printbibliography

\end{document}